\newtheorem{theorem}{Theorem}[section]
\newtheorem{lemma}[theorem]{Lemma}
\theoremstyle{definition}
\newtheorem{definition}[theorem]{Definition}
\newtheorem{remark}[theorem]{Remark}
\numberwithin{equation}{section}
\DeclareMathOperator{\GL}{GL} 
 \DeclareMathOperator{\SO}{SO}
\DeclareMathOperator{\Aut}{Aut}
\DeclareMathOperator{\ad}{ad}
 \DeclareMathOperator{\SL}{SL}
 \DeclareMathOperator{\End}{End}
\newcommand{\RR}{\mathbb{R}}
\newcommand{\QQ}{\mathbb{Q}}
\newcommand{\ZZ}{\mathbb{Z}}
\newcommand{\sor}{\mathfrak{so}(2,1)\ltimes \mathbb{R}^{3}}
\newcommand{\so}{\mathfrak{so}(2,1)}
\newcommand{\slr}{\mathfrak{sl}(3,\mathbb{R})\ltimes \mathbb{R}^{3}}
\newcommand{\sL}{\mathfrak{sl}(3,\mathbb{R})}
\newcommand{\f}{\mathfrak{f}}
\newcommand{\g}{\mathfrak{g}}
\newcommand{\h}{\mathfrak{h}}
\newcommand{\ke}{\ker(\ad(\underline{u}))}
\title[Oppenheim conjecture for systems of forms]{On the density at integer points of a system comprising an inhomogeneous quadratic form and a linear form}
\author{Prasuna Bandi}
\author{Anish Ghosh}
\address{School of Mathematics, Tata Institute of Fundamental Research, Homi Bhabha Road, Navy Nagar, Colaba, Mumbai 400005, India.}\email{prasuna@math.tifr.res.in, ghosh@math.tifr.res.in}
\thanks{The second named author gratefully acknowledges support from a grant from the Indo-French Centre for the Promotion of Advanced Research; a Department of Science and Technology, Government of India Swarnajayanti fellowship and a MATRICS grant from the Science and Engineering Research Board.}
\begin{document}

\begin{abstract}
We prove an analogue of the Oppenheim conjecture for a system comprising an inhomogeneous quadratic form and a linear form in $3$ variables using dynamics on the space of affine lattices.
\end{abstract}

\maketitle
%\tableofcontents

\section{Introduction}
In this paper, we study the values taken at integer points for a pair consisting of an inhomogeneous quadratic form and a linear form in $3$ variables. 
Let $Q$ be a nondegenerate indefinite quadratic form on $\RR^n$. We say that $Q$ is irrational if $Q$ is not proportional to a quadratic form with integer coefficients. It is a famous theorem  of Margulis \cite{Ma89} resolving an old conjecture of Oppenheim that for an irrational, indefinite, nondegenerate quadratic form $Q$ in $n \geq 3$ variables, $Q(\ZZ^n)$ is dense in $\RR$. We refer to \cite{Borel95} for a nice introduction to the problem and Margulis' proof which involves dynamics on homogeneous spaces, and to \cite{Ma97} for a survey. Subsequently, there have been rapid developments in this subject, quantitative versions were proved in \cite{DM93, EMM1, EMM2} and recently effective versions have been established in \cite{LM, Bo16, GGN, GK}. Inhomogeneous quadratic forms have been studied in \cite{MaMo, Marklof}.  

\subsection*{Inhomogeneous quadratic forms}
Let $Q'$ be an inhomogeneous quadratic form on $\mathbb{R}^{n}$, i.e. $Q'$ is a degree two polynomial in $n$ variables. Then $Q'$ can be written as 
$$Q'(x)=Q(x)+L(x)+c \; \forall \; x\in \RR^{n}$$

\noindent where $Q$ is a homogeneous quadratic form on $\mathbb{R}^{n}$, $L$ is a linear form on $\RR^{n}$ and $c\in \RR$. We say that $Q'$ is indefinite and nondegenerate if $Q$ is indefinite and nondegenerate respectively. The form $Q'$ is said to be irrational if either $Q$ is irrational as a homogeneous quadratic form or $L$ is irrational, i.e. not a scalar multiple of a form with integer coefficients.
A particular kind of inhomogeneous quadratic form is defined as follows. Let $Q$ be a nondegenerate homogeneous quadratic form on $\RR^{n}$ and $\xi \in \RR^n$. Define the inhomogeneous quadratic form $Q_\xi$ by
\begin{equation}\label{def:inho}
Q_{\xi}(x) = Q(x+\xi) \text{ for } x \in \RR^n.
\end{equation}
 It is easy to see that $Q_{\xi}$ is irrational iff either $Q$ is irrational as a homogeneous quadratic form or $\xi$ is an irrational vector, i.e. not a scalar multiple of a vector with integer coordinates.\\

%The form $Q$ will be referred to as the homogeneous part of $Q_\xi$ and we say that $Q_\xi$ has signature $(p, q)$ if $Q$ does.
%\noindent Let $Q'$ be an inhomogeneous quadratic form on $\mathbb{R}^{n}$. Then $Q'$ can be written as 
%$$Q'(x)=Q(x)+\xi.x+c \; \forall \; x\in \RR^{n}$$

%\noindent where $Q$ is a homogeneous quadratic form on $\mathbb{R}^{n}$, $\xi \in \RR^{n}$ and $c\in \RR$. We say that $Q'$ is indefinite and nondegenerate if $Q$ is indefinite and nondegenerate respectively. $Q'$ is said to be irrational if either $Q$ is irrational as a homogeneous quadratic form or $\xi$ is an irrational vector, i.e. not a scalar multiple of a form with rational coordinates.\\

In \cite{MaMo}, Margulis and Mohammadi proved quantitative forms of Oppenheim's conjecture for inhomogeneous forms. Their work contains the qualitative density as a special case. In particular, Theorem $1.4$ in \cite{MaMo} implies that for an indefinite, irrational, nondegenerate inhomogeneous quadratic form $Q_{\xi}$ in $n \geq 3$ variables, $Q_{\xi}(\ZZ^n)$ is dense in $\RR$.
%Now, let $Q$ be a homogeneous non-degenerate quadratic form on $\RR^{n}$ and $\xi$ be a vector in $\RR^{n}$. Define the inhomogeneous quadratic form $Q_{\xi}$ by
%\begin{center}
	%$Q_{\xi}(x)=Q(x+\xi)\; \forall \; x\in \RR^{n}$
%\end{center}

%Then it can be easily seen that $Q_{\xi}$ is indefinite iff $Q$ is indefinite and $Q_{\xi}$ is irrational if and only if either $Q$ is an irrational quadratic form or $\xi$ is an irrational vector.\\
\subsection*{Systems of forms}
The problem of density at integer values for systems of forms dates back to Dani and Margulis \cite{DM90}. They proved that for a  $3$ variable quadratic form $Q$ and a linear form $L$, $$\{(Q(x),L(x))~:~x \in \ZZ^3 \}$$ is dense in $\RR^{2}$ if no nonzero linear combination of $Q$ and $L^2$ is rational, and the plane $\{L = 0\}$ is tangent to the surface $\{Q = 0\}$.  In \cite{Dani00}, Dani proved that if the surface $\{Q = 0\}$ and the plane $\{L = 0\}$ intersect transversally, the density can fail for a set pairs of full Hausdorff dimension. The work of Dani and Margulis was generalised by Gorodnik \cite{Goro04a} who studied pairs comprising a quadratic and linear form in dimensions greater than $3$. Subsequently, he  studied systems of quadratic forms in \cite{Goro04b}. Further progress on systems comprising a quadratic and linear form was made in \cite{Dani08} by Dani. In a related direction, Lazar \cite{Lazar} studied the density of a pair comprising a quadratic and linear form at $S$-integer points, see also the recent paper \cite{Lazar19}. Sargent \cite{Sargent}, studied the density of linear forms at integer points on a quadratic surface.   

\subsection*{Results}
It is a natural question to investigate the density at integer values of systems consisting of inhomogeneous forms. We take the first step in this paper by investigating a pair consisting an inhomogeneous form and a linear form. Our main theorem is:

\begin{theorem}\label{thm:main}
	Let $Q_{\xi}$ be an inhomogeneous, nondegenerate and indefinite quadratic form in $3$ variables and let $L$ be a linear form on $\RR^{3}$. Suppose that:
	\begin{enumerate} 
	\item the plane $\{x\in\RR^{3}\;|\; L(x)=0\}$ is tangential to the cone $\{x\in\RR^{3}\;|\; Q(x)=0\}$ and 
	\item any non-zero linear combination of $Q_{\xi}$ and $L^{2}$ is an irrational quadratic form.
	\end{enumerate}
	Then $\{(Q_{\xi}(x),L(x))\;|\:x\in \ZZ^{3}\}$ is a dense subset of $\RR^{2}$.
\end{theorem}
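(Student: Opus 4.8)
The plan is to translate the density question into a statement about orbit closures in the space of affine lattices and to invoke Ratner's theorem together with the tangency hypothesis. Write $G = \SL(3,\RR) \ltimes \RR^3$ acting on the space of affine unimodular lattices $\Omega = G/\Gamma$, where $\Gamma = \SL(3,\ZZ)\ltimes \ZZ^3$. The pair $(Q_\xi, L)$ is associated to an affine lattice $\Lambda = g\ZZ^3 + v$ via a reference pair $(Q_0, L_0)$ and the element $(g,v)\in G$: one arranges $Q_\xi(x) = Q_0(g^{-1}(x+\xi))$ and $L(x) = L_0(g^{-1}x)$ after normalization. The values $\{(Q_\xi(x), L(x)) : x\in\ZZ^3\}$ are then exactly $\{(Q_0(w), L_0(w)) : w \in \Lambda\}$, so the density statement becomes the assertion that the projection of $\Lambda$ under the proper map $w\mapsto (Q_0(w), L_0(w))$ has dense image. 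The first step, then, is to make this dictionary precise and to identify the stabilizer $H \leq G$ of the pair $(Q_0, L_0)$; the tangency condition (1) is what guarantees that $H$ is generated by unipotents, in fact that the relevant subgroup is (a conjugate of) $\SO(2,1)\ltimes \RR^3$ or a suitable parabolic, acting so that its orbit on $\RR^3$ sweeps out the level sets of the pair.

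Next I would study the orbit closure $\overline{H\cdot x_0}$ in $\Omega$, where $x_0 = (g,v)\Gamma$ is the point corresponding to our lattice. By Ratner's orbit closure theorem (in the affine/unipotent setting, as used by Margulis--Mohammadi for the inhomogeneous problem and by Dani--Margulis for the homogeneous system), this closure is a homogeneous subset $L x_0$ for some closed subgroup $L$ with $H \leq L \leq G$. The heart of the argument is the irrationality hypothesis (2): I would show that if the orbit closure were a proper homogeneous subset — i.e. if $L\neq G$ — then the pair $(Q_\xi, L)$ would be forced to satisfy an algebraic constraint, namely that some nonzero linear combination $\alpha Q_\xi + \beta L^2$ is rational (proportional to an integral form). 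This is the standard mechanism: intermediate subgroups between $\SO(2,1)\ltimes\RR^3$ and $\SL(3,\RR)\ltimes\RR^3$ correspond to invariant rational structures, and ruling them out via (2) forces $\overline{H x_0} = \Omega$, equivalently density of $Hx_0$. Once the orbit is dense in $\Omega$, density of the value set follows because the value map is continuous and surjective onto $\RR^2$ on a single $H$-orbit of a generic lattice.

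The main obstacle I anticipate is the classification of the intermediate subgroups $L$ and the careful bookkeeping needed to extract the rational linear combination from each possibility, particularly because we are in the affine (non-linear) setting where the translation part $\RR^3$ interacts with the linear part. One must handle separately the case where the projection of $L$ to $\SL(3,\RR)$ is all of $\SL(3,\RR)$ but the translation component is degenerate (which is where $\xi$ being rational would intrude, and where condition (2) must be used to preclude a nontrivial rational relation involving the inhomogeneous part), versus the case where the linear projection is a proper subgroup containing $\SO(2,1)$ (where the tangency condition and the homogeneous Oppenheim mechanism for $(Q,L)$ give the rational combination of $Q$ and $L^2$). A secondary technical point is verifying that the tangency hypothesis (1) produces precisely the unipotent-generated stabilizer needed for Ratner's theorem to apply with the correct dimension count, rather than a group whose orbits are too small to span the fibers of the value map; I would treat this by an explicit normal form for a quadratic form whose zero cone is tangent to the plane $\{L=0\}$, reducing to a single model pair $(Q_0, L_0)$ up to the $G$-action and then computing $H$ directly.
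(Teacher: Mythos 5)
Your overall strategy---pass to the space of affine lattices, identify the joint stabilizer of a normal form of the pair, apply Ratner's orbit closure theorem, and convert proper orbit closures into rationality statements contradicting hypothesis (2)---is the same as the paper's. However, there are two concrete errors in the plan that would derail its execution. First, you misidentify the stabilizer. The joint stabilizer $\SO(Q_{\xi},L)$ is \emph{not} a conjugate of $\SO(2,1)\ltimes\RR^{3}$ or of a parabolic: after normalizing to $Q_{0}(x)=2x_{1}x_{3}-x_{2}^{2}$, $L_{0}(x)=x_{3}$ and translation vector $(0,0,\alpha)$ with $\alpha=\mu L(\xi)$, the stabilizer is the \emph{one-parameter} unipotent group $H_{\alpha}$ (a single unipotent flow, as in Dani--Margulis). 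Indeed $\SO(2,1)\ltimes\RR^{3}$ preserves neither $L$ nor the level sets of $Q_{\xi}$, and the value map is constant on each $H_{\alpha}$-orbit in $\RR^{3}$, not surjective on it. The tangency hypothesis is what makes this one-parameter group unipotent rather than diagonalizable (which is where Dani's transversal counterexamples live), so you have the right reason but the wrong group. The practical consequence is severe: classifying closed connected (unimodular) subgroups of the $11$-dimensional group $G$ that contain a one-dimensional unipotent subgroup is a much larger task than classifying subgroups containing $\SO(2,1)\ltimes\RR^{3}$; the paper needs Winternitz's classification of subalgebras of $\mathfrak{sl}(3,\RR)\ltimes\RR^{3}$ and ends up with roughly two dozen intermediate groups ($V_{1},V,W,Q_{1},Q_{2},N,N_{1},N_{2}$, conjugates of $\SO(Q_{0})^{\circ}$, the affine families $P_{\beta},A_{\alpha},B_{\alpha}$, each with various translation parts). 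Your plan does not budget for this.

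Second, your central dichotomy---``if the orbit closure $Lx_{0}$ is proper then some nonzero combination $aQ_{\xi}+bL^{2}$ is rational, so (2) forces $\overline{Hx_{0}}=\Omega$''---is false in both directions. Condition (2) does \emph{not} force the orbit closure to be all of $G/\Gamma$; it only excludes certain intermediate groups. For many proper $F$ (e.g.\ $\SL(3,\RR)\ltimes\{0\}$, $W\ltimes\RR^{3}$, $Q_{1}\ltimes\RR^{3}$, $P_{\beta}$), one has $\overline{(g,v)F(g,v)^{-1}\ZZ^{3}}=\RR^{3}$, so the values are dense even though the orbit closure is a proper homogeneous subset, and no rationality can be extracted from them. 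The correct architecture is a case split over the classified list: for each $F$ either the $F$-orbit of the affine lattice is dense in $\RR^{3}$ (density of values follows from $\overline{f(\RR^{3})}=\RR^{2}$), or one uses the Borel density theorem, the fact that Zariski closures of subsets of $\Gamma$ are defined over $\QQ$, and conjugacy/self-normalization of parabolics over $\QQ$ to conclude that either $Q_{\xi}-2tL^{2}$ or $L^{2}$ itself fails to be irrational, contradicting (2). Your final paragraph gestures at a two-case split (linear projection all of $\SL(3,\RR)$ versus proper), but that split is far too coarse to carry out the argument, and as stated the key logical step of your proof does not hold.
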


\noindent \textbf{Remarks}:
\begin{enumerate}
\item Our proof uses the strategy of Margulis, currently the only available strategy for density problems involving forms in low variables, and involves dynamics of group actions on the space of affine lattices in $\RR^{3}$. Condition $(1)$ in Theorem \ref{thm:main} implies that the joint stabilizer of the inhomogeneous form and the linear form is a unipotent group and so the corresponding action is subject to Ratner's theorems. As in the case of Dani's result \cite{Dani00} referred to above, if the  plane $\{x\in\RR^{3}\;|\; L(x)=0\}$ intersects the cone $\{x\in\RR^{3}\;|\; Q(x)=0\}$ transversally, we expect that the density will fail for a full Hausdorff dimension set of pairs.  
\item Condition $(2)$ is natural to assume for density.

\end{enumerate}

Along the way, we need several lemmata which can also be used to study the Oppenheim conjecture for a single inhomogeneous quadratic form, and so we take the opportunity to present a self contained proof of the following theorem.

\begin{theorem}\label{thm:io}
	Let $Q_{\xi}$ be an indefinite, irrational and non-degenerate quadratic form in $n$ variables, $n\geq3$. Then $Q_{\xi}(\mathbb{Z}^{n})$ is dense in $\mathbb{R}$.
\end{theorem}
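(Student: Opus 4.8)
The plan is to transplant the Margulis strategy to the space of affine unimodular lattices $Y=\mathrm{ASL}(n,\RR)/\mathrm{ASL}(n,\ZZ)$, where $\mathrm{ASL}(n,\RR)=\SL(n,\RR)\ltimes\RR^{n}$. After a linear change of variables and a rescaling I may assume that $Q=Q_{0}$ is a fixed standard indefinite form of signature $(p,q)$ with $p\geq q\geq 1$, $p+q=n\geq 3$ (replacing $Q_{\xi}$ by $-Q_{\xi}$ if necessary, which does not affect density), the change of variables being effected by some $g\in\SL(n,\RR)$. The datum $(Q,\xi)$ then becomes the affine lattice $\Lambda_{0}=g\ZZ^{n}+g\xi$, i.e.\ a point $y_{0}\in Y$, and I set $H=\SO(Q_{0})^{\circ}$, embedded in $\mathrm{ASL}(n,\RR)$ as $h\mapsto(h,0)$. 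Since $Q_{0}$ is indefinite and $n\geq 3$, $H$ is connected and semisimple with no compact factors, is generated by one-parameter unipotent subgroups, and---crucially---acts $\RR$-irreducibly on $\RR^{n}$.

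The decisive elementary observations are two. First, $Q_{0}$ is $H$-invariant, so for every $h\in H$ the affine lattice $h\Lambda_{0}$ carries exactly the $Q_{0}$-values $\{Q_{0}(g(m+\xi)):m\in\ZZ^{n}\}$, which up to a fixed positive scalar equals $\{Q_{\xi}(m):m\in\ZZ^{n}\}$; and if $\Lambda'=\lim_{k}h_{k}\Lambda_{0}$ in $Y$ and $w'\in\Lambda'$, then writing $w'=\lim_{k}h_{k}w_{k}$ with $w_{k}\in\Lambda_{0}$ gives $Q_{0}(w')=\lim_{k}Q_{0}(w_{k})$, which lies in the closure of the value set. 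Hence $\{Q_{0}(w'):w'\in\Lambda'\in\overline{Hy_{0}}\}$ is contained in the closure of the value set. Second, $H$ acts transitively on each nonzero level set $\{Q_{0}=c\}$, so if a single point of such a level set occurs in some lattice of $\overline{Hy_{0}}$ then, by $H$-invariance of $\overline{Hy_{0}}$, the entire level set occurs. It therefore suffices to show that the points lying in the affine lattices of the orbit closure $\overline{Hy_{0}}$ realise a dense set of $Q_{0}$-values; in particular, if $\overline{Hy_{0}}=Y$ then every $w'\in\RR^{n}$ occurs and density is immediate.

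To compute $\overline{Hy_{0}}$ I would invoke Ratner's orbit closure theorem: as $H$ is generated by unipotents, $\overline{Hy_{0}}=Ly_{0}$ for a closed subgroup $H\subseteq L\subseteq\mathrm{ASL}(n,\RR)$ with $Ly_{0}$ supporting an $L$-invariant probability measure (this subsumes the non-divergence that Margulis had to establish by hand). Projecting along the proper map $Y\to X:=\SL(n,\RR)/\SL(n,\ZZ)$, the image of $Ly_{0}$ is precisely $\overline{H\cdot g\ZZ^{n}}$, which by the homogeneous Oppenheim theory is all of $X$ when $Q$ is irrational and a closed orbit when $Q$ is rational. Moreover $L\cap\RR^{n}$ is a closed $H$-invariant subgroup of $\RR^{n}$; since $H$ is connected and noncompact it cannot preserve a nonzero discrete subgroup, and by $\RR$-irreducibility it cannot preserve a proper subspace, so $L\cap\RR^{n}\in\{0,\RR^{n}\}$. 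If $L\cap\RR^{n}=\RR^{n}$ then $Ly_{0}$ contains the full torus fibre over its base, so every point of $\RR^{n}$ occurs and we are done.

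It remains to treat the cases $L\cap\RR^{n}=0$, and this is where I expect the main obstacle to lie. If $Q$ is irrational, the orbit lies over the dense base $X$, being confined to a finite cover of $X$ exactly when $\xi$ is rational; on this cover Margulis' density argument (transitivity on level sets together with the presence of lattices carrying points of arbitrarily small nonzero $Q_{0}$-value) applies verbatim and yields density, so the rational shift is harmless. The genuinely new case is $Q$ rational and $\xi$ irrational, where the base orbit $Hg\ZZ^{n}$ is closed; here I must exclude the graph-like possibility $L\cap\RR^{n}=0$ and force the full torus fibre. This is precisely the translation-lattice bookkeeping that requires the $\QQ$-irreducibility of the standard representation of $\SO(Q)$ (ruling out proper invariant rational subspaces and hence confining the orbit of an irrational $\xi$ to no proper subtorus) together with the irrationality of $\xi$ (ruling out a torsion, hence rational, section). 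Establishing this dictionary between smallness of $\overline{Hy_{0}}$ and rationality of $Q_{\xi}$ is the technical heart and is exactly the content of the lemmata announced before the statement; granting it, every case yields a dense set of realised values, completing the proof.
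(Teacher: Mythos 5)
Your overall architecture --- pass to the space of affine lattices, use that $Q_{0}$ is $\SO(Q_{0})^{\circ}$-invariant and transitive on nonzero level sets, apply Ratner's orbit closure theorem to get an intermediate group $L$, and convert ``small orbit closure'' into ``rationality of $Q_{\xi}$'' --- is exactly the paper's strategy. But as written the proposal has two genuine gaps, and they sit precisely at the points you label as deferred.

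First, you work in $\SL(n,\RR)\ltimes\RR^{n}$ for general $n$ and assert that the projected orbit closure $\overline{H\cdot g\ZZ^{n}}$ ``by the homogeneous Oppenheim theory is all of $X$ when $Q$ is irrational and a closed orbit when $Q$ is rational.'' The homogeneous Oppenheim theorem is a statement about values, not about orbit closures; the dichotomy you invoke is equivalent to classifying the closed connected subgroups between $\SO(p,q)^{\circ}$ and $\SL(n,\RR)$ (a Dynkin-type maximality statement) together with a rationality argument, and it is only readily available for $n=3$. The paper avoids this entirely by first proving a restriction lemma: $Q_{\xi}$ indefinite, irrational, nondegenerate admits a rational hyperplane on which its restriction retains all three properties, so by induction one may assume $n=3$; only then does it prove maximality of $\mathfrak{so}(2,1)$ in $\mathfrak{sl}(3,\RR)$ and classify the subgroups of $G=\SL(3,\RR)\ltimes\RR^{3}$ containing $H$ (the answer being just $H$, $\SO(2,1)^{\circ}\ltimes\RR^{3}$, $\SL(3,\RR)\ltimes\{0\}$ and $G$ --- in particular no ``graph-like'' intermediate groups occur, which also disposes of your worry in the irrational-$Q$, $L\cap\RR^{n}=0$ case without any cocycle argument). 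Without the reduction to $n=3$, your appeal to ``Margulis' density argument verbatim'' is not justified.

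Second, the case you correctly identify as the heart of the matter --- the Ratner group is (a conjugate of) $H$ itself, so the orbit is closed --- is exactly the case you do not prove; ``granting it, every case yields a dense set of realised values'' is a restatement of the theorem, not an argument. The paper's treatment is concrete: if $[H(g,-\xi)^{-1}]$ is closed with finite measure, then $\Gamma\cap\SO(Q_{\xi})^{\circ}$ is a lattice in $\SO(Q_{\xi})^{\circ}$; since that group is generated by unipotents, the Borel density theorem makes the lattice Zariski dense, and the Zariski closure of any subset of $\SL(3,\ZZ)\ltimes\ZZ^{3}$ is defined over $\QQ$, so $\SO(Q_{\xi})^{\circ}$ is defined over $\QQ$. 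One then needs the nontrivial fact (Lemma 3.4) that this forces \emph{both} $Q$ to be proportional to a rational form \emph{and} $\xi$ to be rational; its proof computes the centralizer of $\SO(Q_{\xi})^{\circ}$ in $\GL(3,\RR)\ltimes\RR^{3}$ to show that the group determines the pair $(\sigma,\xi)$ up to scaling $\sigma$, and then applies $\Aut(\mathbb{C}/\QQ)$. Your heuristic about $\QQ$-irreducibility and torsion sections gestures at this but supplies neither the identification of which subgroups of the fibre-trivial type can arise nor the mechanism tying the closed orbit to rationality of $\xi$. These two steps need to be filled in for the proposal to constitute a proof.
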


As noted above, Theorem \ref{thm:io} is already implied by the work of Margulis and Mohammadi \cite{MaMo}, so we make no claims to originality as regards Theorem \ref{thm:io}. 

\section{Notation}
This paper is heavy on notation, so we are devoting this section to defining the various groups that will play a role in subsequent chapters. We have a natural action of $\SL(3,\RR)\ltimes \RR^{3}$ on $\RR^{3}$ given by $$(g,v).x=gx+v$$ where $(g,v)\in \SL(3,\RR)\ltimes \RR^{3}$ and $x\in \RR^{3}$.
\begin{definition}
	Given inhomogeneous quadratic forms $Q_{\xi}$ and $Q'_{\xi'}$ on $\RR^{3}$, say $Q_{\xi}$ is equivalent to $Q'_{\xi'}$ denoted by $Q_{\xi} \sim Q'_{\xi '}$ iff there exists $(g,v)\in \SL(3,\mathbb{R})\ltimes \mathbb{R}^{3}$ and $\lambda \in \mathbb{R}\setminus \{0\}$ such that $\lambda Q_{\xi}((g,v).x)=Q'_{\xi '}(x) \; \forall \, x\in \mathbb{R}^{3}$.
\end{definition}

Given an inhomogeneous, indefinite and nondegenerate quadratic form $Q_{\xi}$, it is easy to see that $Q_{\xi} \sim Q_{0}$, where $Q_{0}(x)=x_{1}^{2}+x_{2}^{2}-x_{3}^{2}$. Indeed, since $Q$ is an indefinite, nondegenerate and homogeneous quadratic form in 3 variables, its signature is either $(2,1)$ or $(1,2)$ and hence there exists $\lambda \in \mathbb{R}\setminus \{0\}$ and $g\in \SL(3,\mathbb{R})$ such that $\lambda Q(gx)=Q_{0}(x)$. Let $v=-\xi$. Then, $\lambda Q_{\xi}((g,v).x)=\lambda Q(gx)=Q_{0}(x)$ which gives $Q_{\xi} \sim Q_{0}$.

\begin{definition}
	Let $Q_{\xi}, Q'_{\xi'}$ be inhomogeneous quadratic forms and $L,L'$ be linear forms on $\RR^{3}$. We say that the pairs $(Q_{\xi},L)$ and $(Q'_{\xi'},L')$ are equivalent iff there exists $\lambda , \mu \in \RR\ \setminus \{0\}$ and $(g,v)\in \SL(3,\RR)\ltimes \RR^{3}$ such that $\lambda Q_{\xi}((g,v).x)=Q'_{\xi'}(x)$ and $\mu L((g,v).x)=L'(x)$.
\end{definition}

\begin{definition}\label{def:sta}
	For an inhomogeneous quadratic form $Q_{\xi}$ and a linear form $L$ on $\RR^{3}$, define 
	\begin{equation*}
		\SO(Q_{\xi}):=\{(g,v)\in \SL(3,\mathbb{R})\ltimes \mathbb{R}^{3}\; | \;Q_{\xi}((g,v).x)=Q_{\xi}(x)\; \forall \; x\in \mathbb{R}^{3} \},
	\end{equation*}
	\begin{equation*}
		\SO(L):=\{(g,v)\in \SL(3,\mathbb{R})\ltimes \mathbb{R}^{3}\; | \;L((g,v).x)=L(x)\; \forall \; x\in \mathbb{R}^{3} \},
	\end{equation*}
	\noindent and
	\begin{equation*}
		\SO(Q_{\xi},L)= \SO(Q_{\xi})\cap \SO(L).
	\end{equation*}
\end{definition}

For a subgroup $H$ of $G$, $H^{\circ}$ denotes the identity component of $H$ and $N(H)$ denotes the normalizer of $H$ in $G$.  We set $G=\SL(3,\mathbb{R})\ltimes \mathbb{R}^{3}, \Gamma=\SL(3,\mathbb{Z})\ltimes \mathbb{Z}^{3}$ and $H=\SO(2,1)^{\circ}\ltimes \{0\}.$ Note that $\Gamma$ is a nonuniform lattice in $G$.\\
	 
	 Let $$V_{1}=\left\{\begin{pmatrix}
	 	1 & t &  \frac{t^{2}}{2}  \\
	 	0 & 1 &  t \\
	 	0 & 0 & 1
	 	\end{pmatrix} : t\in \RR\right\},
	 \quad 
	 V=\left\{\begin{pmatrix}
	 1 & a &  b  \\
	 0 & 1 &  a \\
	 0 & 0 & 1
	 \end{pmatrix} : a,b\in \RR\right\},$$\\
	 $$W=\left\{\begin{pmatrix}
	 1 & a &  b \\
	 0 & 1 &  c \\
	 0 & 0 & 1
	 \end{pmatrix} : a,b,c\in \RR\right\},
	 \quad \text{ and }
	 D=\left\{\begin{pmatrix}
	 t & 0 &  0 \\
	 0 & 1 &  0\\
	 0 & 0 & t^{-1}
	 \end{pmatrix} : t\in \RR\setminus \{0\}\right\}.$$\\
	 For $t\in \RR$, let 
	 $$v(t)=\begin{pmatrix}
	 1 & 0 &  t \\
	 0 & 1 &  0 \\
	 0 & 0 & 1
	 \end{pmatrix},\quad
	 N=\left\{\begin{pmatrix}
	 	a & b &  c \\
	 	0 & a^{-2} &  d\\
	 	0 & 0 & a
	 \end{pmatrix} : a\in \RR\setminus \{0\}, b,c,d \in \RR\right\},$$\\
	 $$N_{1}=\left\{\begin{pmatrix}
	 a^{2} & b &  c \\
	 0 & a^{-5} &  d\\
	 0 & 0 & a^{3}
	 \end{pmatrix} : a\in \RR\setminus \{0\}, b,c,d \in \RR\right\},$$
	 $$N_{2}=\left\{\begin{pmatrix}
	 a^{3} & b &  c \\
	 0 & a^{-5} &  d\\
	 0 & 0 & a^{2}
	 \end{pmatrix} : a\in \RR\setminus \{0\}, b,c,d \in \RR\right\}.$$\\
	 Let 
	 $$Q_{1}=\left\{g=\begin{pmatrix}
	 1 &     \\
	 0 &  A  \\
	 0 &  
	 \end{pmatrix} : A \text{ is a } 3\times 2 \text{ matrix such that } g\in \SL(3,\RR)\right\}$$\\
	 and
	 $$Q_{2}=\left\{g=\begin{pmatrix}
	  & B &   \\
	 0 & 0 &  1
	 	 \end{pmatrix} : B \text{ is a } 2\times 3 \text{ matrix such that } g\in \SL(3,\RR)\right\}.$$\\
For $\beta \in\RR\setminus\{0\}$,  we set $$P_{\beta}=\left\{\left(\begin{pmatrix}
	 	 1 & t &  c \\
	 	 0 & 1 &  t\\
	 	 0 & 0 & 1
	 	 \end{pmatrix},\begin{pmatrix}
	 	 a \\
	 	 b \\
	 	 (c-\frac{t^{2}}{2})a\beta 
	 	 \end{pmatrix} \right): a,b,c,t \in\RR \right\},$$\\
and for $\alpha\in \RR$, set $$H_{\alpha}=\left\{\left(\begin{pmatrix}
	 	 1 & t &  \frac{t^{2}}{2} \\
	 	 0 & 1 &  t\\
	 	 0 & 0 & 1
	 	 \end{pmatrix},\begin{pmatrix}
	 	 \frac{\alpha t^{2}}{2} \\
	 	 \alpha t\\
	 	 0 
	 	 \end{pmatrix} \right): t \in\RR \right\},$$ \\
$$A_{\alpha}=\left\{\left(\begin{pmatrix}
	 	 1 & a &  b \\
	 	 0 & 1 &  c\\
	 	 0 & 0 & 1
	 	 \end{pmatrix},\begin{pmatrix}
	 	 \frac{a^{2}\alpha}{2} \\
	 	 a\alpha \\
	 	 0 
	 	 \end{pmatrix} \right): a,b,c, \in\RR \right\},$$
and	 	 
	 	$$B_{\alpha}=\left\{\left(\begin{pmatrix}
	 	 1 & a &  b \\
	 	 0 & 1 &  c\\
	 	 0 & 0 & 1
	 	 \end{pmatrix},\begin{pmatrix}
	 	 d \\
	 	 a\alpha \\
	 	 0 
	 	 \end{pmatrix} \right): a,b,c,d \in\RR \right\}.$$\\
Note that $N(V_{1})=DV$ and $N(V)=DW$ and\\
		 $$N(W)=\left\{g=\begin{pmatrix}
		 d & a &  b  \\
		 0 & e &  c\\
		 0 & 0 & f
		 \end{pmatrix}:g \in \SL(3,\RR)\right\},
		  N(Q_{2})=\left\{g=\begin{pmatrix}
		 d & a &  b  \\
		 g & e &  c\\
		 0 & 0 & f
		 \end{pmatrix}:g \in \SL(3,\RR)\right\}.$$\\
We now move to the Lie algebras of these subgroups.		 
Let $$\mathfrak{V_{1}}=\left\{\begin{pmatrix}
		 0 & a &  0  \\
		 0 & 0 &  a \\
		 0 & 0 & 0
		 \end{pmatrix} : a,\in \RR\right\},\quad
		 \mathfrak{V}=\left\{\begin{pmatrix}
		 0 & a &  b  \\
		 0 & 0 &  a \\
		 0 & 0 & 0
		 \end{pmatrix} : a,b\in \RR\right\},$$\\
		 $$\mathfrak{W}=\left\{\begin{pmatrix}
		 0 & a &  b  \\
		 0 & 0 &  c \\
		 0 & 0 & 0
		 \end{pmatrix} : a,b,c\in \RR\right\},\quad
		 \mathfrak{N}=\left\{\begin{pmatrix}
		 a & b &  c  \\
		 0 & -2a &  d \\
		 0 & 0 & a
		 \end{pmatrix} : a,b,c,d\in \RR\right\},$$\\
		 $$\mathfrak{N_{1}}=\left\{\begin{pmatrix}
		 2a & b &  c  \\
		 0 & -5a &  d\\
		 0 & 0 & 3a
		 \end{pmatrix} : a,b,c,d\in \RR\right\},\quad
		 \mathfrak{N_{2}}=\left\{\begin{pmatrix}
		 3a & b &  c  \\
		 0 & -5a &  d\\
		 0 & 0 & 2a
		 \end{pmatrix} : a,b,c,d\in \RR\right\},$$\\
		 $$\mathfrak{Q_{1}}=\left\{\begin{pmatrix}
		 0 & b &  c  \\
		 0 & a &  d\\
		 0 & e &-a
		 \end{pmatrix} : a,b,c,d,e\in \RR\right\},\quad
		 \mathfrak{Q_{2}}=\left\{\begin{pmatrix}
		 a & b &  c  \\
		 e & -a &  d\\
		 0 & 0 & 0
		 \end{pmatrix} : a,b,c,d,e\in \RR\right\}.$$\\
		 For $t\in \RR$, let $$\mathfrak{R_{t}}=\left\{\begin{pmatrix}
		 a & b &  0  \\
		 c & 0 &  b-2tc\\
		 0 & c & -a
		 \end{pmatrix} : a,b,c,\in \RR\right\},$$\\
		 and for $\beta \in\RR\setminus\{0\}$, set 
		 $$\mathfrak{P_{\beta}}=\left\{\left(\begin{pmatrix}
		 0 & a &  b \\
		 0 & 0 &  a\\
		 0 & 0 & 0
		 \end{pmatrix},\begin{pmatrix}
		 c \\
		 d \\
		 b\beta
		 \end{pmatrix} \right): a,b,c,t \in\RR \right\}.$$\\
		 
		 For $\alpha\in \RR$, set $$\mathfrak{H_{\alpha}}=\left\{\left(\begin{pmatrix}
		 0 & a &  0 \\
		 0 & 0 & a\\
		 0 & 0 & 0
		 \end{pmatrix},\begin{pmatrix}
		 0 \\
		 a\alpha\\
		 0 
		 \end{pmatrix} \right): t \in\RR \right\}.$$ \\
		 
		 Finally, we set $$\mathfrak{A}_{\alpha}=\left\{\left(\begin{pmatrix}
		 0 & a &  b \\
		 0 & 0 &  c\\
		 0 & 0 & 0
		 \end{pmatrix},\begin{pmatrix}
		 0 \\
		 a\alpha \\
		 0 
		 \end{pmatrix} \right): a,b,c, \in\RR \right\},$$ \\
		 and
		 $$\mathfrak{B}_{\alpha}=\left\{\left(\begin{pmatrix}
		 0 & a &  b \\
		 0 & 0 &  c\\
		 0 & 0 & 0
		 \end{pmatrix},\begin{pmatrix}
		 d \\
		 a\alpha \\
		 0 
		 \end{pmatrix} \right): a,b,c,d \in\RR \right\}.$$\\

	 For a subgroup $C$ of $\SL(3,\RR)$, denote by $C \ltimes \RR$ and $C \ltimes \RR^{2}$ the subgroups of $G$ consisting of elements 
	 $$\left\{\left(h,\begin{pmatrix}
	 a \\
	 0 \\
	 0 
	 \end{pmatrix}\right): a\in \RR, h\in C\right\}, \text{ and } \left\{\left(h,\begin{pmatrix}
	 a \\
	 b \\
	 0 
	 \end{pmatrix}\right): a,b\in \RR, h\in C\right\}$$ respectively.

\section{Preparatory Lemmata}

In this section, we prove some lemmata required for proving the theorems.
\begin{lemma}\label{l}
With notation as in (\ref{def:sta}), we have that
	$$\SO(Q_{\xi},L)=\{(g,g\xi -\xi)\; | \; g\in  \SO(Q,L) \} .$$ 
\end{lemma}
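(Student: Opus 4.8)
The plan is to compute the two stabilizers $\SO(Q_{\xi})$ and $\SO(L)$ separately by writing out their defining identities as polynomials in $x$ and matching coefficients degree by degree, and then to intersect them. To set up notation, write $Q(y)=\langle Sy,y\rangle$ for a symmetric matrix $S$, which is invertible because $Q$ is nondegenerate, and write $L(x)=\langle \ell,x\rangle$ for a nonzero $\ell\in\RR^{3}$. Throughout, any $g$ appearing is an element of $\SL(3,\RR)$ and hence invertible.

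First I would handle $\SO(Q_{\xi})$. Fix $(g,v)\in G$. Using $Q_{\xi}(x)=Q(x+\xi)$ and the substitution $y=x+\xi$, the identity $Q_{\xi}((g,v).x)=Q_{\xi}(x)$ for all $x$ becomes $Q(gy+w)=Q(y)$ for all $y$, where $w=v+\xi-g\xi$. Expanding $Q(gy+w)=Q(gy)+2\langle Sgy,w\rangle+Q(w)$ and matching terms of each degree in $y$ gives three conditions: the quadratic term forces $Q(gy)=Q(y)$, i.e. $g\in\SO(Q)$; the linear term forces $\langle Sgy,w\rangle=0$ for all $y$, and since $Sg$ is invertible its image is all of $\RR^{3}$, so $w=0$; and the constant term $Q(w)=0$ is then automatic. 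Hence $(g,v)\in\SO(Q_{\xi})$ if and only if $g\in\SO(Q)$ and $v=g\xi-\xi$.

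Next I would treat $\SO(L)$, which is easier. Expanding $L((g,v).x)=\langle g^{T}\ell,x\rangle+\langle\ell,v\rangle$ and comparing with $L(x)=\langle\ell,x\rangle$ degree by degree shows that $(g,v)\in\SO(L)$ if and only if $g^{T}\ell=\ell$, that is $g\in\SO(L)$ as a linear map, together with $\langle\ell,v\rangle=L(v)=0$.

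Finally I would intersect the two descriptions. An element $(g,v)$ lies in $\SO(Q_{\xi},L)=\SO(Q_{\xi})\cap\SO(L)$ precisely when $g\in\SO(Q)\cap\SO(L)=\SO(Q,L)$, $v=g\xi-\xi$, and $L(v)=0$. The one point that requires checking rather than merely being read off is that the extra constraint $L(v)=0$ is redundant: for $g\in\SO(L)$ we have $L(g\xi)=L(\xi)$, so $L(g\xi-\xi)=0$ holds automatically. This yields exactly $\SO(Q_{\xi},L)=\{(g,g\xi-\xi)\mid g\in\SO(Q,L)\}$. I do not expect a genuine obstacle here, as the computation is elementary; the only mild subtleties are using the nondegeneracy of $Q$ and invertibility of $g$ to conclude $w=0$, and verifying the redundancy of $L(v)=0$ so that the two stabilizer conditions are compatible.
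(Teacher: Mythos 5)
Your proposal is correct and follows essentially the same route as the paper: the substitution $y=x+\xi$, the expansion of $Q(gy+w)$ with degree-by-degree matching, and the use of nondegeneracy of $Q$ together with invertibility of $g$ to force $w=v+\xi-g\xi=0$ are exactly the paper's steps. The only cosmetic difference is that you compute $\SO(Q_\xi)$ and $\SO(L)$ separately and then observe that the condition $L(v)=0$ is redundant, whereas the paper substitutes $v=g\xi-\xi$ directly into $L(gx+v)=L(x)$ to conclude $g\in\SO(L)$; both reach the same conclusion.
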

\begin{proof}
   It is easy to see that for $g\in \SO(Q,L)$, $(g,g\xi-\xi)\in \SO(Q_{\xi},L)$. Conversely, suppose $(g,v)\in \SO(Q_{\xi},L)$. Then for $x\in \RR^{3}$, $Q_{\xi}\left((g,v).x\right)=Q_{\xi}(x)$ and $L((g,v).x)=L(x)$ which implies that $Q(gx+v+\xi)=Q(x+\xi)$ and $L(gx+v)=L(x)$. This gives that 
   $$Q(gy+v+\xi-g\xi)=Q(y)\; \forall \; y\in \RR^{3}.$$ 
   Let $\xi'=v+\xi-g\xi$. Then for every $y\in \RR^{3}$, $Q(gy+\xi')=Q(y)$ which implies that $$Q(gy)+Q(\xi')+2(gy)^{t}A\xi'=Q(y),$$ where $A$ denotes the symmetric matrix corresponding to the quadratic form $Q$. This gives that for every $ y\in \RR^{3}$, $Q(gy)=Q(y)$ and $(gy)^{t}A\xi'=0$ which further shows that $g\in \SO(Q)$ and $\xi'=v+\xi-g\xi=0$. Therefore $v=g\xi-\xi$. Substituting for $v$ in $L(gx+v)=L(x)$ gives $L(gy)=L(y)\;\forall \; y\in \RR^{3}$. Hence $g\in \SO(Q,L)$ and $v=g\xi-\xi$ thus proving the lemma.
   \end{proof}
   \begin{remark}\label{ll}
    Taking $L=0$ in the above lemma gives $\SO(Q_{\xi})=\{(g,g\xi -\xi)\; | \; g\in  \SO(Q) \}$. 
   \end{remark}

\begin{lemma}\label{l1}
	With notation as above, 
	$$\SO(Q_{\xi})^{\circ}= (g,-\xi)H(g,-\xi)^{-1}$$ 
	where $g \in \SL(3, \RR)$ is such that $\lambda Q(gx)=Q_{0}(x)$ for some $\lambda \in \RR\setminus \{0\}$.
\end{lemma}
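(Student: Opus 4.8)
The plan is to pass to identity components in the description of $\SO(Q_{\xi})$ furnished by Remark \ref{ll}, and then to realise the resulting group as the asserted conjugate of $H$ by a direct computation in the semidirect product $G=\SL(3,\RR)\ltimes\RR^{3}$, whose group law is $(g_{1},v_{1})(g_{2},v_{2})=(g_{1}g_{2},\,g_{1}v_{2}+v_{1})$.

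First I would record that the map $\phi:\SO(Q)\to G$ given by $\phi(g)=(g,g\xi-\xi)$ is an injective continuous homomorphism: the identity $\phi(g_{1})\phi(g_{2})=(g_{1}g_{2},\,g_{1}g_{2}\xi-\xi)=\phi(g_{1}g_{2})$ follows at once from the group law. By Remark \ref{ll} the image of $\phi$ is exactly $\SO(Q_{\xi})$, so $\phi$ is an isomorphism of topological groups onto $\SO(Q_{\xi})$ and hence carries identity components to identity components, giving
$$\SO(Q_{\xi})^{\circ}=\{(g,g\xi-\xi)\;|\;g\in\SO(Q)^{\circ}\}.$$

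Next I would relate $\SO(Q)^{\circ}$ to $H$. Writing $g_{0}$ for the element $g$ of the statement, so that $\lambda Q(g_{0}x)=Q_{0}(x)$, equivalently $Q(y)=\lambda^{-1}Q_{0}(g_{0}^{-1}y)$, a short substitution shows that for $h\in\SO(Q_{0})$ one has $Q(g_{0}hg_{0}^{-1}y)=\lambda^{-1}Q_{0}(hg_{0}^{-1}y)=\lambda^{-1}Q_{0}(g_{0}^{-1}y)=Q(y)$ for all $y$, while conversely every element of $\SO(Q)$ arises in this way. Thus $\SO(Q)=g_{0}\SO(Q_{0})g_{0}^{-1}$, and since conjugation by $g_{0}$ is a homeomorphism, $\SO(Q)^{\circ}=g_{0}\SO(Q_{0})^{\circ}g_{0}^{-1}$. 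Because $Q_{0}$ has signature $(2,1)$ we have $\SO(Q_{0})^{\circ}=\SO(2,1)^{\circ}$, so that $H=\{(h,0)\;|\;h\in\SO(Q_{0})^{\circ}\}$.

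Finally I would carry out the conjugation, which is the only computation and presents no real obstacle. Using $(g_{0},-\xi)^{-1}=(g_{0}^{-1},g_{0}^{-1}\xi)$ and applying the group law twice, for $h\in\SO(Q_{0})^{\circ}$ one obtains
$$(g_{0},-\xi)(h,0)(g_{0}^{-1},g_{0}^{-1}\xi)=(g_{0}hg_{0}^{-1},\,g_{0}hg_{0}^{-1}\xi-\xi).$$
Setting $g=g_{0}hg_{0}^{-1}$, which ranges over $\SO(Q)^{\circ}$ as $h$ ranges over $\SO(Q_{0})^{\circ}$, the right-hand side is precisely $(g,g\xi-\xi)$. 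Comparing this with the displayed description of $\SO(Q_{\xi})^{\circ}$ from the first step yields the claimed equality. The point requiring a little care throughout is that each step is compatible with passage to identity components — automatic from $\phi$ being a topological isomorphism and from conjugation being a homeomorphism — so that the intertwining of the linear parts (conjugation by $g_{0}$ carrying $\SO(Q_{0})$ to $\SO(Q)$) correctly produces the translation part $g\xi-\xi$.
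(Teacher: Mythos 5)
Your proposal is correct and follows essentially the same route as the paper: both rest on Remark \ref{ll}, the identity $\SO(Q)=g\SO(2,1)g^{-1}$, and the computation $(g,-\xi)(h,0)(g,-\xi)^{-1}=(ghg^{-1},ghg^{-1}\xi-\xi)$. The only (welcome) difference is that you justify the passage to identity components explicitly via the topological isomorphism $g\mapsto(g,g\xi-\xi)$, whereas the paper simply takes components of both sides at the end.
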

\begin{proof}
	Since $\lambda Q(gx)=Q_{0}(x)$, we have that $\SO(Q)=g\SO(2,1)g^{-1}$. Let $h\in \SO(2,1)$. It is straight forward to compute that $(g,-\xi)(h,0)(g,-\xi)^{-1}=(ghg^{-1},ghg^{-1}\xi-\xi)$. Then,
	\begin{align*}
		Q_{\xi}\left((ghg^{-1},ghg^{-1}\xi-\xi).x\right)&=Q\left(ghg^{-1}(x+\xi)\right)\\
		&=Q(x+\xi) \quad [\because ghg^{-1}\in \SO(Q)]\\
		&=Q_{\xi}(x).
	\end{align*}
Therefore $(g,-\xi)\SO(2,1)\ltimes \{0\}(g,-\xi)^{-1}\subseteq \SO(Q_{\xi})$.\\
Now, let $(g',v)\in \SO(Q_{\xi})$. By Remark \ref{ll}, we get that $g'\in \SO(Q)$ and $v=g'\xi-\xi$. Since $\SO(Q)=g\SO(2,1)g^{-1}$, there exists $h\in \SO(2,1)$ such that $g'=ghg^{-1}$. Therefore $$(g',v)=(ghg^{-1},ghg^{-1}\xi-\xi)=(g,-\xi)(h,0)(g,-\xi)^{-1}.$$ Hence,
$$\SO(Q_{\xi})\subseteq (g,-\xi)\SO(2,1)\ltimes \{0\}(g,-\xi)^{-1}.$$ Taking the identity components, we get that $\SO(Q_{\xi})^{\circ}=(g,-\xi)H(g,-\xi)^{-1}$.
\end{proof}
\begin{lemma}\label{l2}
$H$ is generated by unipotent elements.
\end{lemma}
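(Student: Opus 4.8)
The plan is to reduce the statement to a Lie algebra computation: since $H=\SO(2,1)^{\circ}\ltimes\{0\}$ is connected, it suffices to produce finitely many one-parameter unipotent subgroups of $H$ whose infinitesimal generators generate the Lie algebra $\mathfrak{so}(2,1)$ as a Lie algebra; the connected subgroup they generate will then automatically be all of $H$. I would work in the realization fixed by the paper, so that $\mathfrak{so}(2,1)$ consists of the real $3\times3$ matrices $X$ with $X^{t}J+JX=0$, where $J=\diag(1,1,-1)$ is the matrix of $Q_{0}$. This algebra is three–dimensional, with basis the elliptic generator $K$ and the two hyperbolic generators $A,B$ given by
\[
K=\begin{pmatrix}0&1&0\\-1&0&0\\0&0&0\end{pmatrix},\qquad A=\begin{pmatrix}0&0&1\\0&0&0\\1&0&0\end{pmatrix},\qquad B=\begin{pmatrix}0&0&0\\0&0&1\\0&1&0\end{pmatrix}.
\]

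Next I would single out the two nilpotent elements $X_{+}=K+A$ and $X_{-}=K-A$. A direct check shows $X_{\pm}^{3}=0$, so $\exp(tX_{+})$ and $\exp(tX_{-})$ are genuine one-parameter \emph{unipotent} subgroups of $H$ (each is a polynomial curve in $t$ of degree two, since $\exp(tX_{\pm})=I+tX_{\pm}+\tfrac{t^{2}}{2}X_{\pm}^{2}$). The crucial point is the bracket relation $[X_{+},X_{-}]=2B\neq0$: together with the fact that $\dim\mathfrak{so}(2,1)=3$, this forces $\{X_{+},X_{-},[X_{+},X_{-}]\}$ to be a basis of $\mathfrak{so}(2,1)$ (equivalently, $(X_{+},B,X_{-})$ is an $\mathfrak{sl}_{2}$-triple). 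Hence the Lie subalgebra generated by the two nilpotents $X_{+}$ and $X_{-}$ is all of $\mathfrak{so}(2,1)$.

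Finally, let $H'$ be the subgroup of $H$ generated by $\exp(\RR X_{+})$ and $\exp(\RR X_{-})$. It is a connected Lie subgroup whose Lie algebra is closed under brackets and contains $X_{+},X_{-}$, hence contains the subalgebra they generate, namely all of $\mathfrak{so}(2,1)$. Therefore $\dim H'=\dim H$, and since $H$ is connected we conclude $H'=H$; as $H'$ is by construction generated by unipotent elements, so is $H$. I do not expect a genuine conceptual obstacle here: the only thing to verify is the explicit nilpotency $X_{\pm}^{3}=0$ and the bracket $[X_{+},X_{-}]=2B\neq0$, a routine $3\times3$ matrix computation whose sole role is to guarantee that the two chosen unipotent directions do not lie in a common proper (solvable) subalgebra. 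Alternatively, one could invoke the general structural fact that a connected semisimple Lie group with no compact factors is generated by its unipotent one-parameter subgroups, applied to the connected simple noncompact group $\SO(2,1)^{\circ}$; the explicit triple above simply makes the argument self-contained.
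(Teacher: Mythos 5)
Your argument is correct and is essentially the paper's own proof: both exhibit two nilpotent elements of $\mathfrak{so}(2,1)$ (the paper uses $K+A$ and $K+B$ in your notation, you use $K\pm A$) whose bracket completes a basis, so the nilpotents generate the Lie algebra, and then connectedness of $H$ finishes the job. The only cosmetic point is that $[X_{+},X_{-}]\neq 0$ alone does not formally ``force'' linear independence of the triple; one should note (as is immediate here) that $B\notin\operatorname{span}(X_{+},X_{-})=\operatorname{span}(K,A)$.
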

\begin{proof}
  Let $\h$ be the lie algebra of $H$. Denote by 
  $$h_{(a,b,c)}=\left( \begin{pmatrix}
  0&a&b\\
  -a&0&c\\
  b&c&0
  \end{pmatrix},
  \begin{pmatrix}
  0\\
  0\\
  0
  \end{pmatrix}\right).$$\\
  Then $\h=\so \ltimes \{0\}=\{h_{(a,b,c)}\;|\; a,b,c\in \RR\}$. The elements $h_{(1,1,0)}$ and $h_{(1,0,1)}$ of $\h$ are nilpotent and their Lie bracket is $h_{(1,1,1)}$. Since $h_{(1,1,0)}, h_{(1,0,1)}$ and $h_{(1,1,1)}$ form a basis for $\h$, we get that $h_{(1,1,0)}$ and $h_{(1,0,1)}$ generate the Lie algebra $\h$. As $\h$ is generated by nilpotent elements and $H$ is connected, we get that $H$ is generated by unipotent elements.
\end{proof}
\noindent For a subset $S$ of $G$, we denote by $\overline{\overline{S}}$ its Zariski closure.
\begin{lemma}\label{l3}
	Let $S$ be a subset of $\SL(3,\ZZ)\ltimes \ZZ^{3}$. Then $\overline{\overline{S}}$ is defined over $\QQ$.
\end{lemma}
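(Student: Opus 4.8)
The plan is to use the fact that every element of $S$ has integer coordinates, so that the vanishing of a polynomial at a point of $S$ imposes a linear condition with rational coefficients on the coefficients of that polynomial. First I would fix a $\QQ$-embedding of $G=\SL(3,\RR)\ltimes\RR^{3}$ as an affine variety: writing a pair $(g,v)$ as the nine entries of $g\in M_{3}(\RR)$ together with the three coordinates of $v$ realizes $G$ as the closed subset $\{\det g=1\}$ of $\mathbb{A}^{12}$, which is defined over $\QQ$. Under this identification $\Gamma=\SL(3,\ZZ)\ltimes\ZZ^{3}$ is exactly the set of integer points of $G$, so $S\subseteq\ZZ^{12}\subseteq\QQ^{12}$ and $\overline{\overline{S}}$ is a Zariski-closed subset of $\mathbb{A}^{12}$. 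It therefore suffices to show that its vanishing ideal is generated by polynomials with rational coefficients.

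Let $I=I(S)\subseteq\mathbb{C}[x_{1},\dots,x_{12}]$ be the ideal of polynomials vanishing on $S$, and put $I_{\QQ}=I\cap\QQ[x_{1},\dots,x_{12}]$. I would argue degree by degree. For each $d$ let $P_{d}$ be the finite-dimensional $\mathbb{C}$-space of polynomials of degree at most $d$, equipped with the $\QQ$-structure $P_{d}(\QQ)$ spanned by the monomials. For a fixed $s\in S$ the evaluation functional $f\mapsto f(s)$ sends each monomial $x^{\alpha}$ to the rational number $s^{\alpha}$, hence is a linear functional on $P_{d}$ with coefficients in $\QQ$. Consequently $I\cap P_{d}=\bigcap_{s\in S}\ker(\mathrm{ev}_{s})$ is the solution space of a homogeneous linear system all of whose coefficients lie in $\QQ$.

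The crux is the elementary fact that a $\mathbb{C}$-subspace cut out by equations defined over $\QQ$ is itself defined over $\QQ$: since a rational matrix has the same rank over $\QQ$ and over $\mathbb{C}$, a $\QQ$-basis of the rational solution space $I\cap P_{d}(\QQ)$ is $\mathbb{C}$-linearly independent and spans $I\cap P_{d}$, giving $I\cap P_{d}=(I\cap P_{d}(\QQ))\otimes_{\QQ}\mathbb{C}$. Letting $d$ grow, this shows that $I=I_{\QQ}\otimes_{\QQ}\mathbb{C}$ as vector spaces, hence $I=I_{\QQ}\cdot\mathbb{C}[x_{1},\dots,x_{12}]$ as ideals; since $\QQ[x_{1},\dots,x_{12}]$ is Noetherian, $I_{\QQ}$ is finitely generated over $\QQ$ and these generators also generate $I$ over $\mathbb{C}$, so $\overline{\overline{S}}$, being the common zero locus of rational polynomials, is defined over $\QQ$.

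The only point I expect to need a little care is the passage from each graded piece to the whole ideal, namely checking that the rational generators really do cut out $\overline{\overline{S}}$ over $\mathbb{C}$ and not a larger set; this is immediate once every $I\cap P_{d}$ is known to be $\QQ$-rational. For comparison, the same conclusion follows by Galois descent: every $\sigma\in\mathrm{Aut}(\mathbb{C}/\QQ)$ fixes $S$ pointwise and carries Zariski-closed sets to Zariski-closed sets, so $\sigma(\overline{\overline{S}})=\overline{\overline{\sigma(S)}}=\overline{\overline{S}}$, and an $\mathrm{Aut}(\mathbb{C}/\QQ)$-stable closed subvariety over the perfect field $\QQ$ is defined over $\QQ$; I would nonetheless prefer the linear-algebra argument above since it is more self-contained.
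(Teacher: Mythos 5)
Your proof is correct and follows essentially the same approach as the paper: both rest on the observation that evaluation at the integer points of $S$ imposes linear conditions with rational coefficients on the coefficients of a polynomial, so the space of polynomials (of each bounded degree) vanishing on $S$ is the solution space of a rational linear system and hence admits a $\QQ$-basis cutting out $\overline{\overline{S}}$. The paper simply fixes one degree bound $n$ for a finite defining set of $\overline{\overline{S}}$ rather than running through all degrees and invoking Noetherianity, but the substance is identical.
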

\begin{proof}
	Suppose $\overline{\overline{S}}$ is the set of zeroes of $\mathfrak{S}$ for some $\mathfrak{S}\subset \mathfrak{P}^{n}$, where $\mathfrak{P}^{n}$ denotes the set of polynomials of degree $\leq n$. Then the subspace $\{f\in \mathfrak{P}^{n}\;|\; f(S)=0\}$ is defined by linear equations with rational coefficients, since $S\subset \SL(3,\ZZ)\ltimes \ZZ^{3}$. As $\mathfrak{S}\subseteq \{f\in\mathfrak{P}^{n}\; |\; f(S)=0 \}$, we get that $\overline{\overline{S}}$ is defined over $\QQ$.
\end{proof}
\begin{lemma}\label{l4}
	Let $Q$ be an indefinite and nondegenerate quadratic form. If $\SO(Q_{\xi})^{\circ}$ is defined over $\QQ$, then $Q_{\xi}$ is not an irrational quadratic form.
\end{lemma}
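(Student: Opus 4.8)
We want to prove Lemma 1.16 (\texttt{l4}): if $\SO(Q_\xi)^\circ$ is defined over $\QQ$, then $Q_\xi$ is not irrational. Recall $Q_\xi$ is irrational iff either $Q$ is irrational (as a homogeneous form) or $\xi$ is irrational. So to prove the lemma I must show that $\SO(Q_\xi)^\circ$ being defined over $\QQ$ forces both $Q$ to be rational (up to scalar) and $\xi$ to be rational (up to scalar). By Lemma 1.14 (\texttt{l1}), $\SO(Q_\xi)^\circ = (g,-\xi)H(g,-\xi)^{-1}$, and by Remark 1.13 its elements have the explicit form $(g', g'\xi - \xi)$ with $g' \in \SO(Q)^\circ$.

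**Plan for the linear (matrix) part.** The key observation is that $\SL(3,\RR)\ltimes\RR^3$ has a natural projection $\pi$ onto $\SL(3,\RR)$ (forgetting the translation part), and $\pi$ is a morphism defined over $\QQ$. First I would check that $\pi$ sends a $\QQ$-group to a $\QQ$-group: the image $\pi(\SO(Q_\xi)^\circ)$ is exactly $\SO(Q)^\circ$ by Remark 1.13, and since the image of a $\QQ$-defined group under a $\QQ$-morphism is again defined over $\QQ$ (its Zariski closure is, and here the image is already closed being a surjection of algebraic groups onto its image), I conclude $\SO(Q)^\circ$ is defined over $\QQ$. This is a classical situation: the identity component of the special orthogonal group of $Q$ being defined over $\QQ$ forces $Q$ to be proportional to a rational form. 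The cleanest route is via the Lie algebra $\mathfrak{so}(Q)$: it is defined over $\QQ$, and $\mathfrak{so}(Q)$ determines $Q$ up to scalar (the invariant symmetric bilinear form is unique up to scalar for the irreducible action of $\mathfrak{so}(2,1)$ on $\RR^3$), so a $\QQ$-rational structure on $\mathfrak{so}(Q)$ pins down a rational multiple of $Q$. Hence $Q$ is rational.

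**Plan for the translation part.** Having shown $Q$ is rational, I must show $\xi$ is rational. Here I would use the affine part of the group. The translation vectors occurring in $\SO(Q_\xi)^\circ$ are $\{g'\xi - \xi : g' \in \SO(Q)^\circ\}$, i.e. the image of the linear map $g' \mapsto (g' - I)\xi$. Since $\SO(Q_\xi)^\circ$ is a $\QQ$-group, its intersection with the translation subgroup $\{I\}\ltimes\RR^3$, and more usefully the set of translation parts appearing, must be compatible with the $\QQ$-structure. Concretely, the tangent space at the identity — the Lie algebra $\mathfrak{so}(Q_\xi)^\circ$ — is defined over $\QQ$, and its elements are $(Z, Z\xi)$ for $Z \in \mathfrak{so}(Q)$ (differentiating $g'\xi-\xi$). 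A $\QQ$-basis of this Lie algebra gives pairs $(Z_i, Z_i\xi)$ with both coordinates rational; since $Q$ is rational the $Z_i$ can be taken rational, and then $Z_i \xi$ rational for enough independent $Z_i$ forces $\xi$ to be rational, because $\SO(Q)^\circ$ acts on $\RR^3$ with no nonzero fixed vector (the standard representation of $\mathfrak{so}(2,1)$ is irreducible, so $\bigcap_i \ker Z_i = 0$ and the map $\xi \mapsto (Z_i\xi)_i$ is injective over $\QQ$).

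**Expected main obstacle.** The routine parts are the group-theoretic bookkeeping; the genuinely delicate step is passing from ``$\SO(Q)^\circ$ defined over $\QQ$'' to ``$Q$ proportional to a rational form.'' One must argue that the $\QQ$-structure on the orthogonal group (or its Lie algebra) determines the form up to a scalar, which relies on the uniqueness of the invariant bilinear form and on ensuring the proportionality constant can be chosen so that the resulting form has rational (not merely real-proportional-to-rational-looking) entries; I would handle this by picking a $\QQ$-rational element $Z \in \mathfrak{so}(Q)$ and using the defining relation $Z^t A + A Z = 0$ as a system of linear equations in the entries of $A$ with rational coefficients, whose solution space is therefore $\QQ$-rational and one-dimensional, yielding a rational representative of $A$ and hence of $Q$. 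The second obstacle, showing $\xi$ rational, is then comparatively clean once irreducibility is invoked.
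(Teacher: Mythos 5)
Your proposal is correct, and it reaches the conclusion by a genuinely different mechanism than the paper. The paper first proves a uniqueness statement --- if $\SO(Q_{\xi})^{\circ}=\SO(Q'_{\xi'})^{\circ}$ then $\xi=\xi'$ and $\sigma=c\sigma'$ --- by showing that $(\sigma,-\xi)(\sigma',-\xi')^{-1}$ centralizes $\SO(Q_{\xi})^{\circ}$ and computing that centralizer to be $\{(c\mathbf{I},(c-1)\xi)\}$; it then runs Galois descent, applying each $\phi\in\Aut(\mathbb{C}/\QQ)$ to conclude $\phi(\sigma)=\alpha_{\phi}\sigma$ and $\phi(\xi)=\xi$, normalizing one entry of $\sigma$ to force $\alpha_{\phi}=1$, and invoking that the fixed field of $\Aut(\mathbb{C}/\QQ)$ is $\QQ$. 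You instead work directly with the $\QQ$-structure on the Lie algebra: the relation $Z^{t}A+AZ=0$ over a rational basis of $\mathfrak{so}(Q)$ is a rational linear system with one-dimensional solution space (uniqueness of the invariant form for the absolutely irreducible standard representation), hence has a rational spanning solution, giving $Q$ rational up to scalar; and the shape $(Z,Z\xi)$ of the Lie algebra of $\SO(Q_{\xi})^{\circ}$ together with $\bigcap_i\ker Z_i=0$ lets you solve a rational linear system uniquely for $\xi$. Both arguments ultimately rest on the same two representation-theoretic facts (the invariant bilinear form is unique up to scalar, and there is no nonzero fixed vector), but yours avoids $\Aut(\mathbb{C}/\QQ)$ entirely in favour of rationality of solution spaces of rational linear systems, which is arguably more elementary and constructive, at the modest cost of invoking that images of $\QQ$-groups under $\QQ$-morphisms and Lie algebras of $\QQ$-groups are defined over $\QQ$. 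Note also that you prove the slightly stronger conclusion $\xi\in\QQ^{3}$, exactly as the paper does, which more than suffices for the paper's notion of ``not irrational.''
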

\begin{proof}
	Firstly, we will show that if $\SO(Q_{\xi})^{\circ}= \SO(Q'_{\xi'})^{\circ}$, then $\xi=\xi'$ and there exists $c\in \RR$ such that $\sigma=c\sigma'$ where $\sigma$ and $\sigma'$ are the symmetric matrices corresponding to $Q$ and $Q'$ respectively. Let $(h,v)\in \SO(Q_{\xi})^{\circ}$. Then by Remark $\ref{ll}$, $h\in \SO(Q)$ and $v=h\xi-\xi$. Since $\SO(Q_{\xi})^{\circ}= \SO(Q'_{\xi'})^{\circ}$, $(h,v)$ also lies in $\SO(Q'_{\xi'})^{\circ}$ which implies that $h\in \SO(Q')$ and $v=h\xi'-\xi'$. Consider,
	\begin{align*}
(h,v)(\sigma,-\xi)(\sigma',-\xi')^{-1}(h,v)^{-1}&=(h,v)(\sigma,-\xi)(h^{t},0)(h^{t},0)^{-1}(\sigma',-\xi')^{-1}(h,v)^{-1}\\
		&=(h\sigma h^{t},v-h\xi)(h\sigma' h^{t}, v-h\xi')^{-1}\\
		&=(\sigma,\xi)(\sigma',-\xi')^{-1}.
	\end{align*}
This implies that $(\sigma,-\xi)(\sigma',-\xi')^{-1}$ lie in the centralizer of $\SO(Q_{\xi})^{\circ}$.\\
\noindent We now\\
$\mathbf{Claim}$: The centralizer of $\SO(Q_{\xi})^{\circ}$ in $\GL(3,\RR)\ltimes \RR^{3}$ is $\left\{(c\mathbf{I},(c-1)\xi)\;|\;c\in \RR \setminus \{0\}\right\}$ where $\mathbf{I}$ denotes the identity matrix.\\
\noindent Proof of the claim. Let $(A,v)\in \GL(3,\RR)\ltimes \RR^{3}$ be such that $(A,v)$ commutes with every element of $H$. Then $A$ lies in the centralizer of $\SO(2,1)^{\circ}$ and $hv = v$ for every $h\in \SO(2,1)^{\circ}$. From (Lemma 2.2 (\romannumeral 2\relax), chapter 6, \cite{BM}), it follows that $A=c\mathbf{I}$ for some $c\in \RR$ and $v=0$. Therefore, the centralizer of $H$ is $\{(c\mathbf{I},0)\;|\;c\in \RR\setminus\{0\}\}$. Since $Q$ is indefinite and nondegenerate, there exists $\lambda\in\RR\setminus\{0\}$ and $g\in \SL(3,\RR)$ such that $\lambda Q(gx) = Q_{0}(x)$. Hence by Lemma $\ref{l1}$, $\SO(Q_{\xi})^{\circ}= (g,-\xi)H(g,-\xi)^{-1}$. Therefore, the centralizer of $\SO(Q_{\xi})^{\circ}$ is $$\{(g,-\xi)(c\mathbf{I},0)(g,-\xi)^{-1}\;|\;c\in \RR\setminus\{0\}\}=\{(c\mathbf{I},(c-1)\xi)\;|\; c\in \RR\setminus\{0\}\}$$ thereby proving the claim.\\
Therefore, there exists $c\in \RR\setminus \{0\}$ such that $(\sigma,-\xi)(\sigma',-\xi')^{-1}=(c\mathbf{I},(c-1)\xi)$ which gives that $\sigma=c\sigma'$. Since $\SO(Q_{\xi})^{\circ}= \SO(Q'_{\xi'})^{\circ}$, the claim implies that $\xi=\xi'$.\\
Now, let $\phi \in \Aut(\mathbb{C}/\QQ)$. By $\phi(Q)$ we mean the quadratic form obtained by applying $\phi$ to the coefficients of $Q$ and $\phi(\xi)$ is the vector obtained by applying $\phi$ to each coordinate of $\xi$. Then $\SO\left(\phi(Q)_{\phi(\xi)}\right)^{\circ}=\phi\left(\SO(Q_{\xi})^{\circ}\right)=\SO(Q_{\xi})^{\circ}$ (Since $\SO(Q_{\xi})^{\circ}$ is defined over $\QQ$). Therefore, there exists $\alpha_{\phi}\in \RR\setminus\{0\}$ such that $\phi(\sigma)=\alpha_{\phi}\sigma$ and $\phi(\xi)=\xi$ where $\sigma$ is the matrix corresponding to the quadratic form $Q$. By taking a scalar multiple, we can assume that one of the matrix entries of $\sigma$ is rational. Then, as $\phi$ fixes that coefficient, we get that $\alpha_{\phi}=1$. Hence $\phi(\sigma)=\sigma$ and $\phi(\xi)=\xi$ for every $\phi \in \Aut(\mathbb{C}/\QQ)$. Since the fixed point set of $\Aut(\mathbb{C}/\QQ)$ is $\QQ$, we get that $Q$ is a scalar multiple of a rational form and $\xi$ is a  rational vector thus proving that $Q_{\xi}$ is not an irrational quadratic form.
\end{proof}

The following Lemma is well known, see for instance (exercise 17, 1.2, \cite{WM}). We will need it so we present a proof here for completeness.
\begin{lemma}\label{l6}
	$\so$ is a maximal Lie subalgebra of $\sL$.
\end{lemma}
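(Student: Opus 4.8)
The plan is to argue by contradiction: suppose $\mathfrak{m}$ is a Lie subalgebra with $\mathfrak{so}(2,1) \subsetneq \mathfrak{m} \subseteq \mathfrak{sl}(3,\mathbb{R})$, and show that $\mathfrak{m}$ must be all of $\mathfrak{sl}(3,\mathbb{R})$. The natural engine for this is representation theory: I would view $\mathfrak{sl}(3,\mathbb{R})$ as a module over $\mathfrak{so}(2,1)$ under the adjoint action. Since $\mathfrak{so}(2,1) \cong \mathfrak{sl}(2,\mathbb{R})$ is semisimple, its adjoint action on $\mathfrak{sl}(3,\mathbb{R})$ is completely reducible, so $\mathfrak{sl}(3,\mathbb{R})$ decomposes as a direct sum of irreducible $\mathfrak{so}(2,1)$-submodules. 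Here $\mathfrak{so}(2,1)$ sits inside $\mathfrak{sl}(3,\mathbb{R})$ via the standard $3$-dimensional representation of $\mathfrak{sl}(2,\mathbb{R})$ on $\mathbb{R}^3$ (the irreducible $3$-dimensional, or spin-$1$, representation), and $\mathfrak{sl}(3,\mathbb{R}) = \mathfrak{sl}(V)$ where $V$ is this representation.

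The key computation is to decompose $\mathfrak{sl}(V)$ explicitly. As an $\mathfrak{so}(2,1)$-module, $\mathfrak{gl}(V) \cong V \otimes V^{*}$, and since $V$ is self-dual of dimension $3$, the Clebsch–Gordan rule gives $V \otimes V \cong V_{4} \oplus V_{2} \oplus V_{0}$, where $V_{k}$ denotes the irreducible representation of dimension $k+1$ (highest weight $k$). Removing the trivial summand $V_{0}$ corresponding to scalars (the identity matrix), I obtain
\begin{equation*}
\mathfrak{sl}(3,\mathbb{R}) \cong V_{4} \oplus V_{2}
\end{equation*}
as an $\mathfrak{so}(2,1)$-module, where $V_{2} \cong \mathfrak{so}(2,1)$ is the adjoint representation (the copy of the subalgebra itself, dimension $3$) and $V_{4}$ is a $5$-dimensional irreducible complement. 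Because these two summands are nonisomorphic irreducibles, they are the only proper nonzero $\mathfrak{so}(2,1)$-submodules of a complementary type, and any $\mathfrak{so}(2,1)$-submodule of $\mathfrak{sl}(3,\mathbb{R})$ is a direct sum of some subset of $\{V_{4}, V_{2}\}$.

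Now any intermediate subalgebra $\mathfrak{m}$ is in particular an $\mathfrak{so}(2,1)$-submodule (it is $\mathrm{ad}(\mathfrak{so}(2,1))$-invariant, since it contains $\mathfrak{so}(2,1)$ and is closed under bracket). Since $\mathfrak{m}$ strictly contains $V_{2} = \mathfrak{so}(2,1)$, the submodule structure forces $\mathfrak{m} = V_{2} \oplus V_{4} = \mathfrak{sl}(3,\mathbb{R})$, giving the maximality. The one step that genuinely needs checking — and where I expect the only real work to lie — is verifying that the $5$-dimensional complement $V_{4}$ is not itself a subalgebra together with $\mathfrak{so}(2,1)$ in some larger-than-necessary way; but this is automatic from the module decomposition above, since the only submodules containing $V_{2}$ are $V_{2}$ and $V_{2} \oplus V_{4}$. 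The main obstacle is therefore purely computational: pinning down the Clebsch–Gordan decomposition and confirming that the adjoint copy of $\mathfrak{so}(2,1)$ is the $V_{2}$ summand rather than being spread across summands. Alternatively, if one prefers to avoid representation theory, I would instead take an explicit basis $h_{(a,b,c)}$ as in Lemma \ref{l2}, pick any element $X \in \mathfrak{sl}(3,\mathbb{R}) \setminus \mathfrak{so}(2,1)$, and show by direct bracketing of $X$ with the generators $h_{(1,1,0)}$ and $h_{(1,0,1)}$ that the subalgebra generated by $\mathfrak{so}(2,1)$ and $X$ already has dimension $8$; this is elementary but tedious, and the representation-theoretic route is cleaner.
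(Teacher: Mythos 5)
Your proof is correct, and while it runs on the same engine as the paper's --- the representation theory of $\mathfrak{sl}(2,\mathbb{R})$ applied to the restriction of the adjoint representation of $\mathfrak{sl}(3,\mathbb{R})$ to $\mathfrak{so}(2,1)$ --- the execution is genuinely different. You compute the full isotypic decomposition: $\mathbb{R}^{3}$ is the irreducible $3$-dimensional $\mathfrak{sl}(2,\mathbb{R})$-module, so $\mathfrak{sl}(3,\mathbb{R})\cong V_{4}\oplus V_{2}$ by Clebsch--Gordan, and since the two constituents are non-isomorphic irreducibles each occurring with multiplicity one, the only invariant subspaces containing the copy $V_{2}=\mathfrak{so}(2,1)$ are $V_{2}$ itself and the whole algebra; the worry you raise about the adjoint copy being ``spread across summands'' is indeed vacuous precisely because of multiplicity one. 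The paper instead avoids writing down the decomposition: it takes a weight basis, observes that $\ker(\ad(\underline{u}))$ is spanned by the highest weight vectors (so any invariant subspace containing it is everything, and hence a proper invariant subspace must meet it properly), derives from a hypothetical intermediate subalgebra $\mathfrak{h}$ the strict chain $\mathfrak{so}(2,1)\cap\ker(\ad(\underline{u}))\subsetneq \mathfrak{h}\cap\ker(\ad(\underline{u}))\subsetneq\ker(\ad(\underline{u}))$, and then contradicts this by the explicit computation that $\ker(\ad(\underline{u}))$ is only $2$-dimensional while containing $\underline{u}\in\mathfrak{so}(2,1)$. The two arguments encode the same fact --- there are exactly two irreducible constituents --- but yours is the cleaner, more conceptual packaging (at the mild cost of invoking complete reducibility and Clebsch--Gordan, which hold over $\mathbb{R}$ without issue since the finite-dimensional irreducibles of $\mathfrak{sl}(2,\mathbb{R})$ are absolutely irreducible), whereas the paper's version stays closer to an explicit matrix computation and needs only the weight-basis statement it cites.
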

\begin{proof}
 Suppose there exists a subalgebra $\h$ such that $\so\subsetneq\h\subsetneq\sL$. Consider the adjoint representation of $\sL$ restricted to $\so$. This gives a representation of $\so$ and $\h$ is an $\ad(\so)$-invariant subspace of $\sL$ since $\so \subsetneq \h$. Since $\so$ is isomorphic to $\mathfrak{sl}(2,\RR)$, by classification of representations of $\mathfrak{sl}(2,\RR)$ (Proposition 4.9.22, \cite{WM}), we get that there is a sequence $\lambda_{1},\dots,\lambda_{n}$ of natural numbers and a basis $\{w_{i,j}\;|\; 1\leq i\leq n,\; 0\leq j\leq \lambda_{i}\}$ of $\sL$ such that for all $i, j$ we have
 \begin{enumerate}
 \item $\ad(\underline{a})(w_{i,j})=(2j-\lambda_{i})w_{i,j}$\\
 \item $\ad(\underline{u})(w_{i,j})=(\lambda_{i}-j)w_{i,j+1}$\\
 \item $\ad(\underline{v})(w_{i,j})=jw_{i,j-1}$
 \end{enumerate}
 where $\underline{a}, \underline{u}$ and $\underline{v}$ form a basis of $\so$ satisfying the relations $$[\underline{u},\underline{a}]=2\underline{u}, [\underline{v},\underline{a}]=-2\underline{v} \text{ and } [\underline{v},\underline{u}]=\underline{a}.$$ By (2), it follows that $\ke$ is spanned by $\{w_{1,\lambda_{1}},...,w_{n,\lambda_{n}}\}$. So if $W$ is an invariant subspace of $\sL$ containing $\ke$, then $W=\sL$ by (3). Therefore $\h$ does not contain $\ke$. This implies that $\h\cap \ke \subsetneq\ke$.\\
 By replacing $\sL$ by $\h$, we may consider $\ad_{\h}: \so\rightarrow \End(\h)$. Now, $\ker(\ad_{\h}(\underline{u}))=\h \cap \ke$. As $\so$ is a proper $\ad_{\h}(\so)$ invariant subspace of $\h$, by the same argument as above we get that $\h\cap \ke$ is not contained in $\so$. This implies that 
 \begin{equation}
 	 \so \cap \ke \subsetneq \h \cap \ke \subsetneq \ke.
 	 \end{equation}
Let  $$\underline{a}=
\begin{pmatrix}
0&2&2\\
-2&0&2\\
2&2&0
\end{pmatrix},
\underline{u}=
\begin{pmatrix}
0&\sqrt{2}&\sqrt{2}\\
-\sqrt{2}&0&0\\
\sqrt{2}&0&0
\end{pmatrix} \text{ and }
\underline{v}=
\begin{pmatrix}
0&-\sqrt{2}&0\\
\sqrt{2}&0&-\sqrt{2}\\
0&-\sqrt{2}&0
\end{pmatrix}.$$
Then $\{ \underline{a},\underline{u},\underline{v}\} $ forms a basis of $\so$ satisfying $[\underline{u},\underline{a}]=2\underline{u}$, $[\underline{v},\underline{a}]=-2\underline{v}$ and $[\underline{v},\underline{u}]=\underline{a}$. Now, ker(ad(\underline{u}))=$\left\{\begin{pmatrix}
	0&a&a\\
	-a&b&b\\
	a&-b&-b
\end{pmatrix}: a,b \in \RR \right\}$ which is a subspace of dimension 2. From (3.1) it follows that $\so \cap \ke =\{0\}$ which is a contradiction since $$\underline{u}\in \so\cap \ke.$$ 
\end{proof}
\begin{lemma}\label{l7}
The only closed connected subgroups of $G$ containing $H$ are $H$, $\SO(2,1)^{\circ}\ltimes \mathbb{R}^{3}$, $\SL(3,\mathbb{R})\ltimes \{0\}$ and $G$.
\end{lemma}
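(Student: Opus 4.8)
The plan is to pass to Lie algebras and classify the Lie subalgebras $\f$ lying between $\h$ and $\g$, where $\g=\slr$ is the Lie algebra of $G$ and $\h=\so\ltimes\{0\}$ is that of $H$. Because a connected Lie subgroup is uniquely determined by its Lie algebra, and the four subalgebras $\so\ltimes\{0\}$, $\sor$, $\sL\ltimes\{0\}$ and $\g$ are precisely the Lie algebras of the four closed connected subgroups in the statement, it is enough to prove that these are the only subalgebras satisfying $\h\subseteq\f\subseteq\g$.

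Two reductions organise the casework. Let $\pi\colon\g\to\sL$ be the projection with kernel the abelian ideal $\RR^{3}$. First, $\pi(\f)$ is a subalgebra of $\sL$ containing $\so$, so by the maximality Lemma \ref{l6} we have $\pi(\f)=\so$ or $\pi(\f)=\sL$. Second, $\mathfrak{p}:=\f\cap\RR^{3}$ is $\so$-invariant, since $[\so,\f]\subseteq\f$ and $[\so,\RR^{3}]\subseteq\RR^{3}$; as $\RR^{3}$ is the defining (hence irreducible) representation of $\so=\mathfrak{so}(2,1)$, this forces $\mathfrak{p}=\{0\}$ or $\mathfrak{p}=\RR^{3}$.

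I would then run through the resulting cases. If $\pi(\f)=\so$, then $\f\subseteq\so\oplus\RR^{3}$; for $(X,v)\in\f$ with $X\in\so$ one has $(X,0)\in\h\subseteq\f$, whence $(0,v)\in\mathfrak{p}$, so $\f=\so\oplus\mathfrak{p}$ equals $\so\ltimes\{0\}$ or $\sor$. If $\pi(\f)=\sL$ and $\mathfrak{p}=\RR^{3}$, then $\RR^{3}\subseteq\f$, and lifting each $X\in\sL$ to some $(X,v)\in\f$ and subtracting $(0,v)$ gives $(X,0)\in\f$, so $\f=\g$. The one substantial case is $\pi(\f)=\sL$ with $\mathfrak{p}=\{0\}$: here $\pi|_{\f}$ is injective, so $\f$ is the graph $\{(X,\psi(X)):X\in\sL\}$ of a linear map $\psi\colon\sL\to\RR^{3}$ vanishing on $\so$.

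The main obstacle is to show this graph is trivial. The key observation I would use is that $\ker\psi=\{X\in\sL:(X,0)\in\f\}$ is a subalgebra of $\sL$ — being the image under $\pi$ of the subalgebra $\f\cap(\sL\ltimes\{0\})$ — and it contains $\so$; hence Lemma \ref{l6} gives $\ker\psi=\so$ or $\ker\psi=\sL$. A dimension count rules out the former: if $\ker\psi=\so$, then $\psi$ would induce an injection $\sL/\so\hookrightarrow\RR^{3}$, impossible as $\dim(\sL/\so)=5>3=\dim\RR^{3}$. Therefore $\psi\equiv0$ and $\f=\sL\ltimes\{0\}$. This exhausts the cases and yields exactly the four subalgebras, hence, on exponentiating, exactly the four closed connected subgroups $H$, $\SO(2,1)^{\circ}\ltimes\RR^{3}$, $\SL(3,\RR)\ltimes\{0\}$ and $G$.
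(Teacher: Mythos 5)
Your proof is correct, and while it shares the paper's skeleton (pass to Lie algebras, project to $\sL$, and invoke the maximality of $\so$ from Lemma \ref{l6}), it diverges genuinely in how the hard case $P(\f)=\sL$ is resolved. The paper splits that case according to whether some $(g,0)\in\f$ with $g\in\sL\setminus\so$ exists; when it does not, the paper chooses five explicit matrices $g_{1},g_{2},g,h,k$ and computes brackets by hand until a nonzero element of $\{0\}\ltimes\RR^{3}$ is forced into $\f$, yielding a contradiction. You instead organize the casework by the $\so$-invariant subspace $\f\cap\RR^{3}$, which irreducibility of the standard representation pins to $\{0\}$ or $\RR^{3}$, and in the remaining case you realize $\f$ as the graph of a linear map $\psi\colon\sL\to\RR^{3}$ vanishing on $\so$; the observation that $\ker\psi=P\bigl(\f\cap(\sL\ltimes\{0\})\bigr)$ is a subalgebra containing $\so$, combined with Lemma \ref{l6} and the count $\dim(\sL/\so)=5>3$, kills $\psi$ without any matrix computation. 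Your route is shorter, conceptually cleaner, and more robust (it would survive replacing the standard representation by any irreducible $\so$-module of dimension less than $\dim\sL-\dim\so$), whereas the paper's explicit computation is self-contained at the level of bare linear algebra and requires no graph or quotient-dimension reasoning. Each step of your argument checks out: $\f\cap\RR^{3}$ is indeed $\so$-invariant because $\RR^{3}$ is an abelian ideal of $\g$ and $\h\subseteq\f$, and the injectivity of $P|_{\f}$ when $\f\cap\RR^{3}=\{0\}$ legitimately produces the linear map $\psi$.
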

\begin{proof}
Denote by $\mathfrak{g}$ and $\mathfrak{h},$ the Lie algebras of $G$ and $H$ respectively. We will show that the only Lie subalgebras of $\g$ containing $\h$ are $\h$, $\sor$, $\sL \ltimes \{0\}$ and $\g$. The Lemma will follow from the correspondence between Lie groups and Lie algebras. Let $\mathfrak{f}$ be a Lie subalgebra of $\mathfrak{g}$ such that $\mathfrak{h}\subsetneq \mathfrak{f} \subsetneq \mathfrak{g}$. Let $P$ be the projection map from $\mathfrak{g}$ to $\mathfrak{sl}(3,\mathbb{R})$. Then, $P(\mathfrak{f})$ is a lie subalgebra of $\mathfrak{sl}(3,\mathbb{R})$ containing $\mathfrak{so}(2,1)$. Since $\mathfrak{so}(2,1)$ is a maximal Lie subalgebra of $\mathfrak{sl}(3,\mathbb{R})$ (by Lemma $\ref{l6}$), $P(\mathfrak{f})$ is either equal to $\mathfrak{sl}(3,\mathbb{R})$ or $\mathfrak{so}(2,1)$. We examine these cases separately.\\

\noindent Case 1: $P(\mathfrak{f})=\mathfrak{so}(2,1) $.\\
Since $\mathfrak{so}(2,1) \ltimes \{0\}\subsetneq \mathfrak{f}$, there exists an element $(g,v)\in \mathfrak{f}$ such that $(g,v)\notin \mathfrak{h}$.  The assumption $P(\mathfrak{f})=\mathfrak{so}(2,1)$ implies that $g \in \mathfrak{so}(2,1)$. Since $(g,v)\notin \mathfrak{h}$, we have that $v\neq 0$. As $(g,0)\in \f$, we get $(g,v)-(g,0)=(0,v)\in \f$. Therefore, for all $g\in \so$, $$[(g,0),(0,v)]=(0,gv)\in \f.$$ Since $\so$ acts irreducibly on $\RR^{3}$, we get that $(0,w)\in \f, \;\forall \;w\in \RR^{3}$. Hence, $\forall \; g\in \so$ and $\forall \; w\in \RR^{3}$, we have that $$(g,0)+(0,w)=(g,w)\in \f.$$ Therefore, $\sor \subset \f$ and since $P(\f)=\so$, we get that $\f=\sor$. \\

\noindent Case 2: $P(\f)=\sL$.\\
Assume that for some $g\in \sL \setminus \so$, we have that $(g,0)\in \f$. Since the Lie subalgebra generated by $\so$ and $g$ is $\sL$ (as $\so$ is a maximal subalgebra of $\sL$), we get $(g,0)\in \f \;\forall\; g\in \sL$. Therefore, $\sL \ltimes \{0\}\subset \f$. If $\sL \ltimes \{0\}\subsetneq \f$ then $(0,v)\in \f$ for some non-zero $v$ which implies $(0,w)\in \f \;\forall\; w\in \RR^{3}$ as in Case 1 and hence $\f=\slr$ which is a contradiction. Therefore, $\f=\sL\ltimes \{0\}$.\\

Now, suppose for every $g\in \sL \setminus \so$, we have that $(g,0)\notin \f$. Then for every $g\in \sL \setminus \so$, there exists a non-zero element $v_{g}\in \RR^{3}$, such that $(g,v_{g})\in \f$ since $P(\f)=\sL$. Let
\begin{center}
$g_{1}=
\begin{pmatrix}
	1 & 0 &  0  \\
	0 & 1 &  0 \\
	0 & 0 & -2
\end{pmatrix},\quad
g_{2}=
\begin{pmatrix}
1 & 0 &  1  \\
0 & 1 &  0 \\
1 & 0 & -2
\end{pmatrix},\quad
g=
\begin{pmatrix}
	1 & 0 &  1  \\
	6 & 1 &  0 \\
	1 & 0 & -2
\end{pmatrix}$
\end{center}
\begin{center}
$h=
\begin{pmatrix}
0 & 1 &  0  \\
-1 & 0 &  0 \\
0 & 0 & 0
\end{pmatrix},\quad
k=
\begin{pmatrix}
0 & 0 &  1  \\
0 & 0 &  -1 \\
1 & -1 & 0
\end{pmatrix}.
$
\end{center}
Since $g_{1},\,g_{2},\, g \in \sL\setminus\so$, there exist non zero elements $v_{g_{1}},\, v_{g_{2}},\,v_{g}$ in $\RR^{3}$ such that $$(g_{1},v_{g_{1}}),\, (g_{2},v_{g_{2}}),\, (g,v_{g})\in \f.$$ Since $h,k \in \so$ we have that $(h,0),\, (k,0)\in \f$. Therefore,\\
$$\begin{bmatrix}
(h,0),(g_{1},v_{g_{1}})
\end{bmatrix},
\begin{bmatrix}
(h,0),(g_{2},v_{g_{2}})
\end{bmatrix},
\begin{bmatrix}
(k,0),(g,v_{g})
\end{bmatrix} \in \f$$ which implies that 
$$
\begin{pmatrix}
[h,g_{1}], hv_{g_{1}}
\end{pmatrix},\\
\begin{pmatrix}
[h,g_{2}], hv_{g_{2}}
\end{pmatrix},
\begin{pmatrix}
[k,g], kv_{g}
\end{pmatrix}\in \f
.$$ 
It is straight forward to check that $[h,g_{1}],\, [h,g_{2}], \, [k,g]\in \so$. Therefore $$(0,hv_{g_{1}}), (0,hv_{g_{2}}), (0,kv_{g})\in \f.$$ Now let
$$
v_{g_{1}}=
\begin{pmatrix}
a_{1}\\
b_{1}\\
c_{1}
\end{pmatrix},
v_{g_{2}}=
\begin{pmatrix}
a_{2}\\
b_{2}\\
c_{2}
\end{pmatrix},
v_{g}=
\begin{pmatrix}
a_{3}\\
b_{3}\\
c_{3}
\end{pmatrix},
$$ 
then
$$
hv_{g_{1}}=
\begin{pmatrix}
b_{1}\\
-a_{1}\\
0
\end{pmatrix},
hv_{g_{2}}=
\begin{pmatrix}
b_{2}\\
-a_{2}\\
0
\end{pmatrix}, 
kv_{g}=
\begin{pmatrix}
c_{3}\\
-c_{3}\\
a_{3}-b_{3}
\end{pmatrix}
.$$
If one among $a_{1},b_{1},a_{2},b_{2}$ is non-zero, then either $hv_{g_{1}}$ or $hv_{g_{2}}$ is non-zero and hence $(0,v)\in \f$ for some non zero element $v$. As in case 1, this implies that $\sor \subset \f$. Similarly, if either $c_{3} $ is non-zero or $a_{3}\neq b_{3}$, then $kv_{g}$ is non-zero and hence $(0,v) \in \f$ for some non-zero $v$ which again implies $\sor \subset \f$. Since $P(\f)=\sL$, we get $\f=\slr$ which is a contradiction.
Now, suppose $a_{1}=b_{1}=a_{2}=b_{2}=c_{3}=0$ and $a_{3}=b_{3}$ then 
$
v_{g_{1}}=
\begin{pmatrix}
0\\
0\\
c_{1}
\end{pmatrix},
v_{g_{2}}=
\begin{pmatrix}
0\\
0\\
c_{2}
\end{pmatrix},$ and
$v_{g}=
\begin{pmatrix}
a_{3}\\
a_{3}\\
0
\end{pmatrix}
$.
It is easy to compute that 
$$\begin{bmatrix}
(g,v_{g}),(g_{1},v_{g_{1}})
\end{bmatrix}=
\left(
\begin{pmatrix}
0 & 0 & -3\\
0 & 0 & 0\\
3 & 0 &0
\end{pmatrix},
\begin{pmatrix}
c_{1}-a_{3}\\
-a_{3}\\
-2c_{1}
\end{pmatrix}
\right) \in \f,
$$ and\\
$$\begin{bmatrix}
(g_{2},v_{g_{2}}),(g_{1},v_{g_{1}})
\end{bmatrix}=
\left(
\begin{pmatrix}
0 & 0 & -3\\
0 & 0 & 0\\
3 & 0 &0
\end{pmatrix},
\begin{pmatrix}
c_{1}\\
0\\
2(c_{2}-c_{1})
\end{pmatrix}
\right) \in \f
.$$ 
Hence their difference, which is $(0,v)$ for some non zero $v$, lies in $\f$. This implies that $\sor \subset \f$ which again gives $\f=\slr$ since $P(\f)=\sL$, a contradiction.
 \end{proof}
\section{Proof of Theorem \ref{thm:io}}
In this section, we prove Theorem \ref{thm:io}, i.e. the Oppenheim conjecture for inhomogeneous forms. 

\subsection*{Reduction to the case of $n=3$} \

Using induction on $n$, it follows from the following Lemma that it is enough to prove the theorem for the case of $n=3$.
\begin{lemma}
	Let $Q_{\xi}$ be an indefinite, irrational and nondegenerate quadratic form in $n$ variables, $n\geq3$. Then there exists a rational hyperplane $L$ such that the restriction of $Q_{\xi}$ to $L$ is indefinite, irrational and nondegenerate.
\end{lemma}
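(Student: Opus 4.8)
The plan is to take the hyperplane to be $L=L_v:=\{x\in\RR^n : v^{t}x=0\}$ for a suitable \emph{rational} normal $v\in\QQ^n\setminus\{0\}$, and to show that each of the three requirements holds for $v$ ranging over a nonempty open set, so that a common good $v$ exists. Write $A$ for the symmetric matrix of $Q$ and $B(x,y)=x^{t}Ay$. Assuming $Q^{*}(v):=v^{t}A^{-1}v\neq0$ (which below is seen to be equivalent to nondegeneracy of $Q|_{L_v}$), we have the $B$-orthogonal splitting $\RR^n=L_v\oplus L_v^{\perp}$ with $L_v^{\perp}=\RR\,A^{-1}v$; writing $\xi=\xi_1+\xi_2$ accordingly, for $y\in L_v$ one computes
\[
Q_\xi(y)=Q(y+\xi)=Q|_{L_v}(y+\xi_1)+Q(\xi_2),
\]
so $Q_\xi|_{L_v}$ equals the inhomogeneous form $(Q|_{L_v})_{\xi_1}$ up to the additive constant $Q(\xi_2)$, which is irrelevant for density, and where $\xi_1=\xi-t(v)A^{-1}v$ with $t(v)=(v^{t}\xi)/Q^{*}(v)$. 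So I must find rational $v$ for which $(Q|_{L_v})_{\xi_1}$ is indefinite, nondegenerate and irrational on the rational subspace $L_v$ (with its lattice $\ZZ^n\cap L_v$).

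For the geometric conditions, $Q|_{L_v}$ is nondegenerate iff $Q^{*}(v)\neq0$, and then the one-dimensional form $Q|_{L_v^{\perp}}$ has sign $\operatorname{sign}Q^{*}(v)$, so the signature of $Q|_{L_v}$ is that of $Q$ with its positive (resp.\ negative) index decreased by one when $Q^{*}(v)>0$ (resp.\ $<0$). Since $Q$ is indefinite and nondegenerate with $n\ge3$, its larger index is $\ge2$; choosing the sign of $Q^{*}(v)$ that decreases the larger index leaves both indices $\ge1$, so $Q|_{L_v}$ is indefinite and nondegenerate. As $Q^{*}$ is nondegenerate and indefinite, the relevant set $\mathcal O:=\{v:\pm Q^{*}(v)>0\}$ is a nonempty open cone and hence contains infinitely many pairwise non-proportional rational vectors. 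It remains to secure irrationality for some $v\in\mathcal O\cap\QQ^n$, and I split according to whether $Q$ is irrational or $Q$ is proportional to a rational form (in which case, as $Q_\xi$ is irrational, $\xi$ must be an irrational vector).

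\emph{Case $Q$ irrational.} Here it suffices to find rational $v\in\mathcal O$ with $Q|_{L_v}$ irrational, and I argue by contradiction. If $Q|_{L_v}$ were proportional to a rational form for every rational $v\in\mathcal O$, then for each $\phi\in\Aut(\mathbb C/\QQ)$ the forms $\phi(Q)|_{L_v}$ and $Q|_{L_v}$ would be proportional (as $\phi$ fixes the rational subspace $L_v$). The key algebraic step is then: if quadratic forms $Q_1,Q_2$ with $Q_2$ nondegenerate satisfy $Q_1|_{L_v}\parallel Q_2|_{L_v}$ for infinitely many pairwise non-proportional rational $v$, then $Q_1=\lambda Q_2$. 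Indeed, for each such $v$ choose $t_v$ with $(Q_1-t_vQ_2)|_{L_v}=0$ (possible since a hyperplane is not totally isotropic, so $Q_2|_{L_v}\not\equiv0$); a quadratic vanishing on $L_v=\ker(v^{t}\cdot)$ is divisible by $v^{t}x$, hence has rank $\le2$, so $t_v$ lies in the finite set $\{t:\operatorname{rank}(A_1-tA_2)\le2\}$ (finite because $\det(A_1-tA_2)\not\equiv0$ as $A_2$ is nondegenerate and $n>2$). By pigeonhole one value $t^{*}$ occurs for infinitely many $v$, so the fixed quadratic $Q_1-t^{*}Q_2$ is divisible by infinitely many pairwise non-proportional linear forms and therefore vanishes, giving $Q_1=t^{*}Q_2$. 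Applying this with $Q_1=\phi(Q)$, $Q_2=Q$ gives $\phi(Q)=\lambda_\phi Q$ for all $\phi$; normalizing $Q$ to have a rational entry forces $\lambda_\phi=1$, so $Q$ is proportional to a rational form, contradicting irrationality. Hence a rational $v\in\mathcal O$ makes $Q|_{L_v}$, and so $Q_\xi|_{L_v}$, irrational.

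\emph{Case $\xi$ irrational.} Here $Q$ may be scaled to have rational matrix $A$, so $A^{-1}v\in\QQ^n$ for rational $v$, and I must find rational $v\in\mathcal O$ for which $\xi_1=\xi-t(v)A^{-1}v$ is not proportional to a rational vector. If it were on a rational line for every rational $v\in\mathcal O$, then $\xi\in\RR\,A^{-1}v+\RR u_v=:\Pi_v$ for some rational $u_v$, a rational $2$-plane containing the nonzero vector $\xi$. Since $\mathcal O$ is open and $A^{-1}$ invertible, the vectors $A^{-1}v$ span $\RR^n$ and so cannot all lie in one $2$-plane when $n\ge3$; thus two choices give distinct rational $2$-planes $\Pi_v\neq\Pi_{v'}$ both containing $\xi$, whose intersection is a rational line (nonzero, since it contains $\xi$), forcing $\xi$ onto a rational line and contradicting its irrationality. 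Therefore a suitable rational $v\in\mathcal O$ exists in all cases, proving the lemma. The main obstacle is the case $Q$ irrational, namely the pencil/rank computation combined with Galois descent, which is precisely where the hypothesis $n\ge3$ enters.
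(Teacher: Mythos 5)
Your proof is correct, and it takes a genuinely different route from the paper's. The paper disposes of the case where $Q$ is irrational by citing Lemma 2.1 of Chapter 6 of Bekka--Mayer, and of the case where $Q$ is rational and $\xi$ irrational by asserting that the restriction of $Q_{\xi}$ to \emph{any} rational hyperplane on which $Q$ stays indefinite and nondegenerate is automatically irrational "since $\xi$ is irrational." You instead give a self-contained argument: the pencil-of-quadrics computation (a quadratic vanishing on a hyperplane is divisible by its defining linear form, hence has rank $\le 2$, and $\det(A_{1}-tA_{2})\not\equiv 0$ pins $t_{v}$ to a finite set) combined with Galois descent replaces the citation in the first case, and the rational-two-plane argument handles the second. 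This buys something real in the second case: the restriction of $Q_{\xi}$ to $L_{v}$ is $(Q|_{L_{v}})_{\xi_{1}}$ up to an additive constant, where $\xi_{1}$ is the $B$-orthogonal projection of $\xi$ onto $L_{v}$, and $\xi_{1}$ can perfectly well be rational for a bad choice of $v$ even when $\xi$ is not (e.g.\ $Q=x_{1}^{2}+x_{2}^{2}-x_{3}^{2}-x_{4}^{2}$, $\xi=(0,0,0,\sqrt{2})$, $L=\{x_{4}=0\}$ gives the rational restriction $y_{1}^{2}+y_{2}^{2}-y_{3}^{2}-2$); so the hyperplane genuinely must be chosen to avoid this, which is exactly what your intersection-of-two-rational-planes argument accomplishes and what the paper's one-line justification elides. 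The price of your approach is length; the payoff is that it is complete, elementary, and makes visible exactly where $n\ge 3$ is used in both cases.
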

\begin{proof}
	Since $Q_{\xi}$ is irrational, either $Q$ is an irrational quadratic form or $\xi$ is an irrational vector. Firstly, assume that $Q$ is irrational. Then from (Lemma 2.1, chapter 6, \cite{BM}), it follows that there exists a rational hyperplane $L$ such that restriction of $Q$ to $L$ is indefinite, irrational and nondegenerate. This implies that restriction of $Q_{\xi}$ to $L$ is indefinite, irrational and nondegenerate.\\
	Now, assume that $Q$ is not irrational. Then $\xi$ has to be an irrational vector. Since $Q$ is indefinite and nondegenerate, we can find a rational hyperplane $L$ such that restriction of $Q$ to $L$ is indefinite and nondegenerate (This is a part of the proof of (Lemma 2.1, chapter 6, \cite{BM})). Then the restriction of $Q_{\xi}$ to $L$ is irrational (Since $\xi $ is irrational), indefinite and nondegenerate. 
\end{proof}

Using the above stated lemmas, we now prove Theorem \ref{thm:io} when $n=3$.
\begin{proof}
Let $g\in \SL(3,\RR)$ and $\lambda\in \RR \setminus \{0\}$ be such that $\lambda Q(gx)=Q_{0}(x)$. By Lemma \ref{l2}, we have that $H = \SO(2,1)^{\circ}\ltimes \{0\}$ is generated by unipotent elements and since $\Gamma$ is a lattice in $G$, we may apply Ratner's orbit closure theorem \cite{Ratner} which tells us that there is a closed connected subgroup $F$ of $G$ such that 
\begin{enumerate}
\item $H\subset F,$
\item the image $[F.(g,-\xi)^{-1}]$ of $F.(g,-\xi)^{-1}$ in $G/\Gamma$ is closed and has finite $F$- invariant measure,
\item the closure of $[H.(g,-\xi)^{-1}]$ is equal to $[F.(g,-\xi)^{-1}]$.
	\end{enumerate}
By Lemma \ref{l7}, $F$ is either $H$, $\SO(2,1)^{\circ}\ltimes \RR^{3}$, $\SL(3,\RR)\ltimes \{0\}$ or $G$.\\
Case 1: Suppose $F$ is either $\SO(2,1)^{\circ}\ltimes \RR^{3}$ or $\SL(3,\RR)\ltimes \{0\}$ or $G$. Then observe that $F(g,-\xi)^{-1}\ZZ^{3}=\RR^{3}$. Hence,
\begin{align*}
	\overline{\lambda Q_{\xi}(\ZZ^{3})}\; &=\; \overline{Q_{0}\left((g,-\xi)^{-1}\ZZ^{3}\right)} \qquad[\because \lambda Q_{\xi}\left((g,-\xi)x\right)=Q_{0}(x)]\\
	&=\; \overline{Q_{0}\left((g,-\xi)^{-1}\Gamma \ZZ^{3}\right)} \qquad[\because \big(\SL(3,\ZZ)\ltimes \ZZ^{3}\big).\ZZ^{3}=\ZZ^{3}]\\
	&=\; \overline{Q_{0}\left(H(g,-\xi)^{-1}\Gamma \ZZ^{3}\right)} \qquad[\because Q_{0}(hx)=Q_{0}(x) \; \forall \; h\in H]\\
	& \supseteq \;Q_{0}\left(\overline{H(g,-\xi)^{-1}\Gamma \ZZ^{3}}\right) \qquad [\because Q_{0}\; \text{is continuous}]\\
	&=\; Q_{0}\left(F(g,-\xi)^{-1}\Gamma \ZZ^{3}\right)\qquad [\because \text{ by } (3), \overline{H(g,-\xi)^{-1}\Gamma}=F(g,-\xi)^{-1}\Gamma]\\
	&=\; Q_{0}\left(F(g,-\xi)^{-1}\ZZ^{3}\right)\\
	&=\; Q_{0}\left(\RR^{3}\right)\;=\RR.
	\end{align*}
Therefore, $Q_{\xi}(\ZZ^{3})$ is dense in $\RR$.\\
Case 2: Suppose $F=H$.\\
We will show that $Q_{\xi}$ cannot be an irrational quadratic form. By (2), $\left[H(g,-\xi)^{-1}\right]$ is closed in $G/\Gamma$ and has finite $H$-invariant measure. This implies that $(g,-\xi)H(g,-\xi)^{-1}\cap \Gamma$ is a lattice in $(g,-\xi)H(g,-\xi)^{-1}= \SO(Q_{\xi})^{\circ}$ (By Lemma $\ref{l1}$). Denote by $\Gamma_{(g,\xi)}=(g,-\xi)H(g,-\xi)^{-1}\cap \Gamma$. By the Borel density theorem, all unipotent elements of $\SO(Q_{\xi})^{\circ}$ lie in the Zariski closure of $\Gamma_{(g,\xi)}$. Since $\SO(Q_{\xi})^{\circ}$ is generated by its unipotent elements (Since it is a conjugate of $H$ and $H$ is generated by unipotent elements ), we get that $\SO(Q_{\xi})^{\circ}= \overline{\overline{\Gamma_{(g,\xi)}}}$, where $\overline{\overline{\Gamma_{(g,\xi)}}}$ denotes the Zariski closure of $\Gamma_{(g,\xi)}$. Since $\Gamma_{(g,\xi)}\subseteq \Gamma$, $\overline{\overline{\Gamma_{(g,\xi)}}}$ is defined over $\QQ$ (By Lemma $\ref{l3}$) and hence $\SO(Q_{\xi})^{\circ}$ is defined over $\QQ$. This implies that $Q_{\xi}$ is not an irrational quadratic form (By Lemma $\ref{l4}$).
\end{proof}

\section{Proof of Theorem \ref{thm:main}}
In this section, we denote by $Q_{0}$ the quadratic form defined by $Q_{0}(x)=2x_{1}x_{3}-x_{2}^{2}$.
\begin{lemma}\label{l5}
	Let $Q_{\xi}$ be an inhomogeneous, non-degenerate and indefinite quadratic form and $L$ be a linear form on $\RR^{3}$. Suppose that the plane $\{x\in\RR^{3}\;|\; L(x)=0\}$ is tangential to the cone $\{x\in\RR^{3}\;|\; Q(x)=0\}$. Then there exists $\alpha\in \RR$ such that $(Q_{\xi},L) \sim ((Q_{0})_{(0, 0,\alpha)},L_{0})$ where $Q_{0}(x)= 2x_{1}x_{3}-x_{2}^{2},$ $L_{0}(x)=x_{3}$ and $\overline{\{(Q_{\xi}(x),L(x))\;|\:x\in \RR^{3}\}}=\RR^{2}$.
	\end{lemma}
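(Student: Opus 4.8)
The plan is to prove Lemma~\ref{l5} in two independent parts. The first part is a normal-form statement: under the tangency hypothesis, the pair $(Q_\xi, L)$ is equivalent to $((Q_0)_{(0,0,\alpha)}, L_0)$ for a suitable $\alpha \in \RR$. The second part is the density claim $\overline{\{(Q_\xi(x), L(x)) : x \in \RR^3\}} = \RR^2$, which I expect to be the easier of the two since it concerns real (not integer) points.

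For the \textbf{normal form}, I would begin from the observation already recorded after the first definition in the Notation section: since $Q$ is indefinite, nondegenerate in $3$ variables, there exist $\lambda \neq 0$ and $g \in \SL(3,\RR)$ with $\lambda Q(gx) = Q_0(x)$, where now $Q_0(x) = 2x_1 x_3 - x_2^2$ is the chosen model with isotropic cone containing the coordinate line. The point of using this particular $Q_0$ (rather than $x_1^2 + x_2^2 - x_3^2$) is that its null cone has an explicit tangent plane structure: the plane $\{x_3 = 0\} = \{L_0 = 0\}$ meets the cone $\{2x_1 x_3 = x_2^2\}$ exactly along the line $\RR e_1$, i.e. tangentially. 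The hypothesis that $\{L = 0\}$ is tangent to $\{Q = 0\}$ is an $\SL(3,\RR)$-invariant incidence condition, so after applying $g$ I may assume $\{L \circ g = 0\}$ is tangent to $\{Q_0 = 0\}$. The transitivity of $\SO(Q_0)^\circ$ on the tangent planes to its cone (equivalently, on the isotropic lines, since a plane tangent to the cone is exactly one containing an isotropic line and tangent there) then lets me post-compose with an element of $\SO(Q_0)$ to arrange $L \circ g \propto L_0$; absorbing the scalar via $\mu$ gives $\mu L((g,v)\cdot x) = L_0(x)$. Finally the translation part: writing $v = -\xi'$ and using Remark~\ref{ll}-style bookkeeping, the inhomogeneous form $\lambda Q_\xi((g,v)\cdot x)$ becomes $(Q_0)_{\eta}(x)$ for some $\eta \in \RR^3$; I must then show that, after a further translation by an element of $\SO(Q_0, L_0)$ (which by Lemma~\ref{l} is the relevant stabilizer and is unipotent under the tangency hypothesis), $\eta$ can be normalized to the form $(0,0,\alpha)$, the component that cannot be removed being exactly the one transverse to the group action.

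The \textbf{main obstacle} is this last normalization of the shift vector $\eta$ to $(0,0,\alpha)$: I must compute the orbit of $\eta$ under translations by $\SO(Q_0,L_0)$ together with the residual scalings preserving the pair $(Q_0, L_0)$, and verify that every orbit contains a unique representative of the stated shape. Concretely this reduces to understanding which coordinates of $\eta$ are moved by the unipotent stabilizer $H_\alpha$-type subgroups defined in the Notation section, and checking that the $e_3$-component survives as the genuine invariant $\alpha$; this is a direct but careful linear-algebra computation using the explicit matrix descriptions.

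For the \textbf{density of real values}, it suffices by the just-established equivalence to show $\{((Q_0)_{(0,0,\alpha)}(x), L_0(x)) : x \in \RR^3\} = \RR^2$. Writing $x = (x_1, x_2, x_3)$ one has $L_0(x) = x_3$ and $(Q_0)_{(0,0,\alpha)}(x) = 2x_1(x_3 + \alpha) - x_2^2$. Fixing any target value $s$ for $L_0$ forces $x_3 = s$; I then need $2x_1(s + \alpha) - x_2^2$ to attain every real value as $x_1, x_2$ range over $\RR$. If $s + \alpha \neq 0$ this is immediate by varying $x_1$ alone, and the single excluded level $s = -\alpha$ still yields a closed set whose union with the others is all of $\RR^2$; since equivalence of pairs only rescales and translates the value set by the nonzero factors $\lambda, \mu$, density transfers back to the original $(Q_\xi, L)$. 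I expect this paragraph to be short and entirely routine once the normal form is in hand.
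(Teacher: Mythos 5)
Your proposal follows the same skeleton as the paper's proof — normalize the homogeneous pair $(Q,L)$ to $(Q_0,L_0)$ using the tangency hypothesis, then absorb the shift, then check the real value set — and it is correct in substance, but the paper dispatches the step you single out as "the main obstacle" in one line: it simply sets $\alpha=\mu L(\xi)$ and $v=g\,(0,0,\alpha)^{t}-\xi$, and verifies directly that $\lambda Q_{\xi}((g,v).x)=Q_0\bigl(x+(0,0,\alpha)^{t}\bigr)$ while $\mu L((g,v).x)=L_0(x)+\mu L(v)=L_0(x)$ since $L(v)=\mu^{-1}\alpha-L(\xi)=0$. If you do want to run your orbit argument, be careful about which transformations do the normalizing: elements of $\SO(Q_0,L_0)$ stabilize the pair, and their linear parts move a shift vector $\eta$ only along the curve $h^{-1}\eta$ (in particular they cannot kill $\eta_1$ when $\eta_2=\eta_3=0$), so they are not the right group. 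What actually normalizes $\eta$ is the freedom in the translation component $v$ subject to $L(v)=0$: then $g^{-1}v$ sweeps out the plane $\{x_3=0\}$, the first two coordinates of $\eta=g^{-1}(v+\xi)$ can be set to zero, and the surviving invariant is $\eta_3=L_0(g^{-1}\xi)=\mu L(\xi)=\alpha$ — exactly the paper's choice. One further small slip: the real value set of $((Q_0)_{(0,0,\alpha)},L_0)$ is not all of $\RR^2$ (on the level $x_3=-\alpha$ the first coordinate equals $-x_2^{2}\le 0$, so the points $(t,-\alpha)$ with $t>0$ are missed); it is only dense, which is all the lemma claims and all your argument actually needs.
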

	\begin{proof}
		Since the plane $\{x\in\RR^{3}\;|\; L(x)=0\}$ is tangential to the cone $\{x\in\RR^{3}\;|\; Q(x)=0\}$, there exists $\lambda ,\mu \in \RR \setminus \{0\}$ and $g\in \SL(3,\RR)$ such that $\forall\;x\in \RR^{3}$, $\lambda Q(gx)=Q_{0}(x)$ and $\mu L(gx)=L_{0}(x)$ where $Q_{0}(x)= 2x_{1}x_{3}-x_{2}^{2}$ and $L_{0}(x)=x_{3}$. Let $\alpha= \mu L(\xi)$ and $v=g
		\begin{pmatrix}
		0\\
		0\\
		\alpha
		\end{pmatrix}-\xi$.
		Then it can be easily seen that $$\lambda Q_{\xi}((g,v).x)=(Q_{0})_{(0, 0,\alpha)}(x)$$ and $$\mu L((g,v).x)=L_{0}(x)$$ and hence $(Q_{\xi},L)$ is equivalent to $((Q_{0})_{(0, 0,\alpha)},L_{0})$. Therefore,
		$$\overline{\{(Q_{\xi}(x),L(x))\;|\:x\in \RR^{3}\}}=\overline{\{(Q_{0})_{(0,0,\alpha)}(x),L(x))\;|\:x\in \RR^{3}\}}=\RR^{2}.$$
	\end{proof} 
	
	\begin{lemma}\label{l8} With notation as in section 2,
  \begin{enumerate}
  \item the only closed connected unimodular subgroups of $G$ containing $H_{0}$ are:
\begin{center}
\begin{tabular}{ |c|c|c|c| } 
 \hline
 $V_{1}\ltimes\{0\}$ & $V_{1}\ltimes \RR$ & $V_{1}\ltimes \RR^{2}$ & $V_{1}\ltimes \RR^{3}$ \\ 
 \hline
 $V\ltimes\{0\}$ & $V\ltimes \RR$ & $V\ltimes \RR^{2}$ & $V\ltimes \RR^{3}$ \\ 
 \hline
  $W\ltimes\{0\}$ & $W\ltimes \RR$ &  $W\ltimes \RR^{2}$ &  $W\ltimes \RR^{3}$ \\
 \hline
 $v(t)\SO(Q_{0})^{\circ}v(t)^{-1}\ltimes \{0\}$ & $v(t)\SO(Q_{0})^{\circ}v(t)^{-1}\ltimes \RR^{3}$ & $Q_{1}\ltimes\{0\}$ & $Q_{1}\ltimes \RR$ \\
 \hline
 $Q_{1}\ltimes\RR^{3}$ & $Q_{2}\ltimes\{0\}$ & $Q_{2}\ltimes \RR^{2}$ & $Q_{2}\ltimes\RR^{3}$ \\ 
 \hline
 $N^{\circ}\ltimes \{0\}$ & $N^{\circ}\ltimes \RR^{3}$   & $N_{1}^{\circ}\ltimes \RR$ & $N_{2}^{\circ}\ltimes \RR^{2}$ \\ 
 \hline
 $\SL(3,\RR)\ltimes\{0\}$ &  $\SL(3,\RR)\ltimes\RR^{3}$ & $P_{\beta}$ for $\beta \in \RR\setminus\{0\}$ &  \\
 \hline 
\end{tabular}
\end{center}

  \item for $\alpha \in \RR\setminus\{0\}$, the only closed connected subgroups of $G$ containing $H_{\alpha}$ are:
  \begin{center}
\begin{tabular}{ |c|c|c|c| } 
 \hline
 $V_{1}\ltimes \RR^{2}$ & $V_{1}\ltimes \RR^{3}$ & $V\ltimes \RR^{2}$ & $V\ltimes \RR^{3}$ \\ 
 \hline
 $W\ltimes \RR^{2}$ & $W\ltimes \RR^{3}$ & $v(t)\SO(Q_{0})^{\circ}v(t)^{-1}\ltimes \RR^{3}$ & $Q_{1}\ltimes\RR^{3}$\\ 
 \hline
  $Q_{2}\ltimes \RR^{2}$ & $Q_{2}\ltimes\RR^{3}$ & $N^{\circ}\ltimes \RR^{3}$ & $N_{2}^{\circ}\ltimes \RR^{2}$\\
 \hline
 $\SL(3,\RR)\ltimes\RR^{3}$ &$A_{\alpha}$ & $B_{\alpha}$ & $P_{\beta}$ for $\beta \in \RR\setminus\{0\}$\\
 \hline
\end{tabular}
\end{center}
  
\end{enumerate}
\end{lemma}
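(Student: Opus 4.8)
The plan is to work at the level of Lie algebras and then transport the result to subgroups via the correspondence between connected Lie subgroups and subalgebras, checking closedness at the end. Write $\g=\slr$ and let $\mathfrak{H}_{\alpha}$ be the Lie algebra of $H_{\alpha}$, spanned by $(N,(0,\alpha,0))$ where $N$ generates $\mathfrak{V_1}$. Let $P\colon\g\to\sL$ be the projection onto the linear part. Since $\RR^{3}$ is an abelian ideal of $\g$, the map $P$ is a Lie algebra homomorphism, so for any subalgebra $\f$ with $\mathfrak{H}_{\alpha}\subseteq\f$ the image $\mathfrak{s}:=P(\f)$ is a subalgebra of $\sL$ containing $P(\mathfrak{H}_{\alpha})=\mathfrak{V_1}=\RR N$. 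I would decompose the problem into determining the admissible \emph{bases} $\mathfrak{s}$ and, over each base, the admissible \emph{fibres} recording how $\f$ sits over $\mathfrak{s}$.

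First I would list the subalgebras $\mathfrak{s}\subseteq\sL$ containing the regular nilpotent $N$. Because $N$ is regular nilpotent this list is short: the reductive possibilities are the copy $\so$ of $\mathfrak{so}(2,1)$ containing $N$ and its one-parameter family of conjugates $v(t)\,\so\,v(t)^{-1}$ (which contain $\RR N$ since $v(t)=\exp(tN^{2})$ centralizes $N$), together with $\sL$ itself, $\so$ being maximal by Lemma~\ref{l6}; the remaining possibilities are solvable and built on the nilradical $\mathfrak{W}$ of the Borel, namely $\mathfrak{V_1}\subsetneq\mathfrak{V}\subsetneq\mathfrak{W}$, the two maximal parabolic subalgebras $\mathfrak{Q_1},\mathfrak{Q_2}$, and the Borel-type algebras $\mathfrak{N},\mathfrak{N_1},\mathfrak{N_2}$ (all with nilradical $\mathfrak{W}$ but with distinct one-dimensional diagonal parts). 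These are exactly the linear parts of the subgroups appearing in the two tables.

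Next, for each base $\mathfrak{s}$ I would analyze the fibre. The intersection $\mathfrak{w}:=\f\cap\RR^{3}$ is an $\mathfrak{s}$-submodule of $\RR^{3}$, so I first record the $\mathfrak{s}$-invariant subspaces: with $e_{1},e_{2},e_{3}$ the standard basis, for the upper-triangular bases these are $0,\ \RR e_{1},\ \RR e_{1}+\RR e_{2},\ \RR^{3}$ along the standard flag, while $\so$ and $\sL$ act irreducibly and force $\mathfrak{w}\in\{0,\RR^{3}\}$. The subalgebra $\f$ then sits in an extension $0\to\mathfrak{w}\to\f\to\mathfrak{s}\to0$, determined by a linear section $\sigma\colon\mathfrak{s}\to\RR^{3}/\mathfrak{w}$ whose graph is a subalgebra exactly when $\sigma$ is a cocycle for the bracket $[(X,u),(Y,w)]=([X,Y],Xw-Yu)$. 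The split choice $\sigma=0$ yields the semidirect products $\mathfrak{s}\ltimes\mathfrak{w}$, while the non-trivial admissible cocycles produce precisely the twisted algebras $\mathfrak{P_\beta},\ \mathfrak{A}_{\alpha},\ \mathfrak{B}_{\alpha}$ (and $\mathfrak{H}_{\alpha}$ itself). Imposing $\mathfrak{H}_{\alpha}\subseteq\f$ forces $\sigma(N)\equiv(0,\alpha,0)\pmod{\mathfrak{w}}$: when $\alpha=0$ the split lifts with small $\mathfrak{w}$ all survive, giving the ``$\ltimes\{0\}$'' and $\ltimes\RR$ entries of part~(1); when $\alpha\neq0$ the vector $(0,\alpha,0)$ must be absorbed, which (since $Ne_{2}=e_{1}$, so the smallest invariant subspace containing $e_{2}$ is $\RR e_{1}+\RR e_{2}$) forces either $\mathfrak{w}\supseteq\RR e_{1}+\RR e_{2}$ or a genuinely twisted lift, deleting the small split entries and inserting $A_{\alpha},B_{\alpha}$ in part~(2).

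Finally I would cut the solvable-base cases down using unimodularity: a connected subgroup is unimodular iff $\mathrm{tr}\big(\mathrm{ad}_{\f}(X)\big)=0$ for all $X\in\f$, which for the bases $\mathfrak{N},\mathfrak{N_1},\mathfrak{N_2}$ reduces to a single condition on the diagonal generator. Computing its weights on $\mathfrak{W}$ and on $\mathfrak{w}$ shows that only balanced attachments survive: the diagonal weights $(2,-5,3)$ give nilradical-trace $-2$, cancelled only by $\RR e_{1}$ (weight $2$), forcing $N_{1}^{\circ}\ltimes\RR$; the weights $(3,-5,2)$ give trace $2$, cancelled only by $\RR e_{1}+\RR e_{2}$ (weight $-2$), forcing $N_{2}^{\circ}\ltimes\RR^{2}$; and $(1,-2,1)$ admits only $\{0\}$ or $\RR^{3}$ for $N^{\circ}$. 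This trace condition, which is also what the relevant finite-volume orbits require and so governs part~(2) as well, explains why $N_{1}$ is absent from part~(2) while $N_{2}^{\circ}\ltimes\RR^{2}$ and $N^{\circ}\ltimes\RR^{3}$ remain. Integrating the surviving subalgebras back to connected subgroups and verifying that each is closed then completes the proof. I expect the fibre analysis to be the main obstacle: one must show that the enumerated cocycles exhaust all non-split lifts while each listed lift is genuinely a subalgebra, and carry this case analysis uniformly over every base, the twisted families $\mathfrak{P_\beta},\mathfrak{A}_{\alpha},\mathfrak{B}_{\alpha}$ being the most delicate.
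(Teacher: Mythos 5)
Your route is genuinely different from the paper's. The paper does not classify these subgroups from scratch: it imports the classification of subalgebras of $\sL\ltimes\RR^{3}$ from Winternitz \cite{pw} (via the subalgebra classification algorithm and Table 1 there) and then filters by the two conditions of containing $\mathfrak{H_{\alpha}}$ and being unimodular, with the actual computation left to the reader. Your projection-plus-extension scheme --- determine the base $P(\f)\subseteq\sL$, then analyze the extension $0\to\f\cap\RR^{3}\to\f\to P(\f)\to 0$ via cocycles, then impose unimodularity through the trace of $\ad_{\f}$ --- is the natural way to make the lemma self-contained, and the trace computations you do perform for $\mathfrak{N},\mathfrak{N_{1}},\mathfrak{N_{2}}$ (the cancellations $-2+2$ and $2-2$) are correct and do explain why only $N_{1}^{\circ}\ltimes\RR$ and $N_{2}^{\circ}\ltimes\RR^{2}$ survive.

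That said, both pillars of your argument are asserted rather than established, and the first assertion is false as stated. The subalgebras of $\sL$ containing the regular nilpotent $N$ are not only the ones you list: there are continuous families, e.g.\ $\mathfrak{W}+\RR\,\diag(p,q,r)$ for an arbitrary traceless diagonal direction (of which $\mathfrak{N},\mathfrak{N_{1}},\mathfrak{N_{2}}$ are only three members), the pencil $\mathrm{span}\bigl(N,\ \diag(1,0,-1)+cN^{2}\bigr)$ of two-dimensional algebras normalizing $\RR N$, the three-dimensional $\mathfrak{V}+\RR\,\diag(1,0,-1)$, and the full Borel of upper triangular matrices. These die only after the unimodularity condition is imposed \emph{jointly} with the fibre $\f\cap\RR^{3}$, so the elimination must be run over the whole families; declaring the short list of bases before unimodularity enters puts the quantifiers in the wrong order and silently discards cases. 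Second, the heart of the matter --- that the admissible cocycles over each base produce exactly $\mathfrak{P_{\beta}},\mathfrak{A}_{\alpha},\mathfrak{B}_{\alpha}$ and nothing else --- is exactly the step you defer, and it is not routine: for instance the set of elements $\bigl(aN,(c,a\alpha,0)\bigr)$, $a,c\in\RR$, is a two-dimensional abelian subalgebra of $\g$ containing $\mathfrak{H_{\alpha}}$ whose fibre is only $\RR e_{1}$, so your dichotomy ``either $\f\cap\RR^{3}\supseteq\RR e_{1}+\RR e_{2}$ or one of the named twisted lifts'' does not reproduce the table over the small bases $\mathfrak{V_{1}}$ and $\mathfrak{V}$ without further argument. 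Until the base list is completed and the cocycle analysis is actually carried out (and reconciled with examples like the one above), what you have is a plausible roadmap rather than a proof.
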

\begin{proof}
	The lemma follows from the classification of Lie subalgebras of $\sL\ltimes\RR^{3}$. There has of course been extensive work on this subject, we use the paper \cite{pw} of Winternitz which is well suited for our purpose. Namely, by using the subalgebra classification algorithm (2.4, \cite{pw}) and from Table 1 of \cite{pw}, one can compute that the only unimodular subalgebras of $\sL\ltimes\RR^{3}$ containing $\mathfrak{H_{0}}$ are:	
	$\mathfrak{V_{1}}\ltimes\{0\},\;\mathfrak{V_{1}}\ltimes\RR,\;\mathfrak{V_{1}}\ltimes\RR^{2},\;\mathfrak{V_{1}}\ltimes\RR^{3},\;\mathfrak{V}\ltimes\{0\},\;\mathfrak{V}\ltimes\RR,\;\mathfrak{V}\ltimes\RR^{2},\;\mathfrak{V}\ltimes\RR^{3},\;\mathfrak{W}\ltimes\{0\},\;\mathfrak{W}\ltimes\RR,\;\mathfrak{W}\ltimes\RR^{2},\;\mathfrak{W}\ltimes\RR^{3},\;\mathfrak{K_{t}}\ltimes\{0\},\;\mathfrak{K_{t}}\ltimes\RR^{3},\;\mathfrak{Q_{1}}\ltimes\{0\},\;\mathfrak{Q_{1}}\ltimes\RR,\;\mathfrak{Q_{1}}\ltimes\RR^{3},\;\mathfrak{Q_{2}}\ltimes\{0\},\;\mathfrak{Q_{2}}\ltimes\RR^{2},\;\mathfrak{Q_{2}}\ltimes\RR^{3},\;\mathfrak{N}\ltimes\{0\},\;\mathfrak{N}\ltimes\RR^{3},\;\mathfrak{N_{1}}\ltimes\RR,\;\mathfrak{N_{2}}\ltimes\RR^{2},\;\sL\ltimes\{0\},\;\sL\ltimes\RR^{3},\;\mathfrak{P_{\beta}}$ for $\beta\in \RR\setminus\{0\}$. By taking the Lie subgroups corresponding to these Lie subalgebras, part 1 of the Lemma follows.\\
	
	Similarly for part(2), one can show that the only unimodular subalgebras of $\slr$ containing $\mathfrak{H_{\alpha}}$ for $\alpha\neq 0$ which is the Lie algebra of $H_{\alpha}$ are $\mathfrak{V_{1}}\ltimes\RR^{2},\;\mathfrak{V_{1}}\ltimes\RR^{3},\;\mathfrak{V}\ltimes\RR^{2},\;\mathfrak{V}\ltimes\RR^{3},\;\mathfrak{W}\ltimes\RR^{2},\;\mathfrak{W}\ltimes\RR^{3},\;\mathfrak{K_{t}}\ltimes\RR^{3},\;\mathfrak{Q_{1}}\ltimes\RR^{3},\;\mathfrak{Q_{2}}\ltimes\RR^{2},\;\mathfrak{Q_{2}}\ltimes\RR^{3},\;\mathfrak{N}\ltimes\RR^{3},\;\mathfrak{N_{2}}\ltimes\RR^{2},\;\sL\ltimes\RR^{3},\;\mathfrak{A_{\alpha}},\;\mathfrak{B_{\alpha}},\;\mathfrak{P_{\beta}}$ for $\beta\in \RR\setminus\{0\}$. By the correspondence between Lie groups and Lie algebras, the conclusion of the Lemma holds.
\end{proof}

We are now ready for the proof of Theorem \ref{thm:main}.

\begin{proof}
	By lemma $\ref{l5}$, there exists $\lambda, \mu\in \RR\setminus\{0\}$ and $(g,v)\in \SL(3,\RR)\ltimes \RR^{3}$ such that $\lambda Q_{\xi}((g,v).x)=(Q_{0})_{(0, 0,\alpha)}(x)$ and $\mu L((g,v).x)=L_{0}(x)$. Then by Lemma \ref{l}, it is straight forward to check that $$\SO((Q_{0})_{(0, 0,\alpha)},L_{0})=\left( \begin{pmatrix}
  1&t&\frac{t^{2}}{2}\\
  0&1&t\\
  0&0&1
  \end{pmatrix},
  \begin{pmatrix}
   \frac{\alpha t^{2}}{2}\\
  \alpha t\\
  0
  \end{pmatrix}\right)=H_{\alpha}$$
  and hence $\SO(Q_{\xi},L)=(g,v)H_{\alpha}(g,v)^{-1}$.\\
  Since $H_{\alpha}$ is a unipotent subgroup of $G$, by Ratner's orbit closure theorem \cite{Ratner} there is a closed connected subgroup $F_{\alpha}$ of $G$ such that 
  \begin{enumerate}
  	\item $H_{\alpha}\subset F_{\alpha}$
  	\item the image $[F_{\alpha}.(g,v)^{-1}]$ of $F_{\alpha}.(g,v)^{-1}$ in $G/\Gamma$ is closed and has finite $F_{\alpha}$- invariant measure.
  	\item the closure of $[H_{\alpha}.(g,v)^{-1}]$ is equal to $[F_{\alpha}.(g,v)^{-1}]$ in $G/\Gamma$ .
  \end{enumerate}
Let $x=(g,v)^{-1}\Gamma \in G/\Gamma$. By (2), $F_{\alpha}x$ is closed and has finite $F_{\alpha}$-invariant measure which implies that $F_{\alpha}$ contains a lattice and hence it is unimodular.
Define $f: \RR^{3}\rightarrow \RR^{2}$ by $$f(x)=(Q_{\xi}(x),L(x)).$$
Then 
\begin{align*}
\overline{f(\ZZ^{3})}&= \overline{f((g,v)H_{\alpha}(g,v)^{-1}\Gamma \ZZ^{3})}\\
&\supseteq f\left(\overline{(g,v)H_{\alpha}(g,v)^{-1}\Gamma \ZZ^{3}}\right) \\
&= f\left(\overline{(g,v)F_{\alpha}(g,v)^{-1}\ZZ^{3}}\right). 
\end{align*}
Case 1: Suppose $L(\xi)=0$. Then $\alpha =0$ and by Lemma $\ref{l8}$, $F_{0}$ has to be one of the subgroups $V_{1}\ltimes\{0\},\; V_{1}\ltimes \RR,\; V_{1}\ltimes \RR^{2}, V_{1}\ltimes \RR^{3},\; V\ltimes\{0\},\; V\ltimes \RR,\; V\ltimes \RR^{2}, V\ltimes \RR^{3},\;W\ltimes\{0\},\; W\ltimes \RR,\; W\ltimes \RR^{2}, W\ltimes \RR^{3},\;v(t)\SO(Q_{0})^{\circ}v(t)^{-1}\ltimes \{0\},\; v(t)\SO(Q_{0})^{\circ}v(t)^{-1}\ltimes \RR^{3},\; Q_{1}\ltimes\{0\},\; Q_{1}\ltimes \RR,\; Q_{1}\ltimes\RR^{3},\;Q_{2}\ltimes\{0\},\; Q_{2}\ltimes \RR^{2},\; Q_{2}\ltimes\RR^{3},\;N^{\circ}\ltimes \{0\},\;N^{\circ}\ltimes \RR^{3},\; N_{1}^{\circ}\ltimes \RR,\; N_{2}^{\circ}\ltimes \RR^{2},\;\SL(3,\RR)\ltimes\{0\},\; \SL(3,\RR)\ltimes\RR^{3},\; P_{\beta}$ for $\beta \in \RR\setminus\{0\}$.\\
If $F_{0}$ is one of the subgroups $V_{1}\ltimes\RR^{3},\; V\ltimes \RR^{3},\;W\ltimes \RR^{3},\;Q_{1}\ltimes \{0\},\; Q_{1}\ltimes \RR,\; Q_{1}\ltimes\RR^{3},\;v(t)\SO(Q_{0})^{\circ}v(t)^{-1}\ltimes \RR^{3},\;N^{\circ}\ltimes \RR^{3},\;\SL(3,\RR)\ltimes\{0\},\; \SL(3,\RR)\ltimes\RR^{3},\;P_{\beta}$ then it can be easily verified that $\overline{(g,v)F_{0}(g,v)^{-1}\ZZ^{3}}=\RR^{3}$. Therefore, $f(\RR^{3})\subseteq \overline{f(\ZZ^{3})}$. Since $\overline{f(\RR^{3})}=\RR^{2}$ by Lemma $\ref{l5}$, the conclusion of the Theorem holds.\\

Let $P: \SL(3,\RR)\ltimes \RR^{3}\rightarrow \SL(3,\RR)$ denote the natural projection. Since $F_{0}x$ is closed and has finite $F_{0}$-invariant measure, $(g,v)F_{0}(g,v)^{-1}\cap \Gamma$ is a lattice in $(g,v)F_{0}(g,v)^{-1}$. Assume that $F_{0}$ is generated by unipotent elements. Then by Borel density theorem(4.7.1, \cite{WM}) $(g,v)F_{0}(g,v)^{-1}\cap \Gamma$ is Zariski dense in $(g,v)F_{0}(g,v)^{-1}$. By Lemma $\ref{l3}$, the Zariski closure of $(g,v)F_{0}(g,v)^{-1}\cap \Gamma$ is defined over $\QQ$ and hence $(g,v)F_{0}(g,v)^{-1}$ is defined over $\QQ$. Therefore $P((g,v)F_{0}(g,v)^{-1})= gP(F_{0})g^{-1}$ is also defined over $\QQ$. Hence its normalizer $N(gP(F_{0})g^{-1})=gN(P(F_{0}))g^{-1}$ is defined over $\QQ$.\\

Suppose $F_{0}=v(t)\SO(Q_{0})^{\circ}v(t)^{-1}\ltimes \{0\}$. Since $F_{0}$ is a conjugate of $H$ and $H$ is generated by unipotent elements (by Lemma \ref{l2}) by the above argument we get that $(g,v)F_{0}(g,v)^{-1}$ is defined over $\QQ$. It can be checked that $$\SO(Q'_{\xi})^{\circ}= \SO(Q_{\xi}-2tL^{2})^{\circ}=(g,v)F_{0}(g,v)^{-1}$$ where $Q'=Q-2tL^{2}$. Hence by Lemma \ref{l4}, $Q'_{\xi}$ is not an irrational quadratic form which implies that $Q_{\xi}-2tL^{2}$ is not an irrational quadratic form which is a contradiction.\\
Suppose $F_{0}$ is such that $P(F_{0})$ is either $W$ or $Q_{2}$. Since $N(P(F_{0}))$ is a parabolic subgroup defined over $\QQ$ and $gN(P(F_{0}))g^{-1}$ is also a parabolic subgroup defined over $\QQ$ by (Theorem 20.9, \cite{Borel-book}), there exists $\theta \in \SL(3,\QQ)$ such that $$\theta g N(P(F_{0}))g^{-1}\theta^{-1}=N(P(F_{0})).$$ Therefore $\theta g$ normalises $N(P(F_{0}))$ which implies that $\theta g \in N(P(F_{0}))$ since the normalizer of a parabolic subgroup is the subgroup itself (Theorem 11.16, \cite{Borel-book}). Let $\theta g=h$ where $h\in N(P(F_{0}))$. Then
\begin{align*}
L^{2}(x)&= L_{0}^{2}((g,v)^{-1}.x)\\
&= L_{0}^{2}(g^{-1}x-g^{-1}v)\\
&=L_{0}^{2}(h^{-1}\theta x-g^{-1}v)\\
&=(\beta(q_{1}x_{1}+q_{2}x_{2}+q_{3}x_{3})+c)^{2}, 
\end{align*}
\noindent for some $\beta, c\in \RR$ and $q_{1},q_{2},q_{3}\in \QQ$. Hence $L^{2}$ is not an irrational quadratic form.\\
Now, let $F_{0}$ be such that $P(F_{0})=V_{1}$. Since $N(V_{1})=DV$, $gDVg^{-1}$ is defined over $\QQ$ and hence its unipotent radical $gVg^{-1}$ is also defined over $\QQ$ (0.23, [5]). Again, since $N(V)=DW$, we get that $gDWg^{-1}$ is defined over $\QQ$ and hence its unipotent radical $gWg^{-1}$ is defined over $\QQ$. Similarly, when $P(F_{0})=V$, it follows that $gWg^{-1}$ is defined over $\QQ$. By the argument as before, this gives that $L^{2}$ is not an irrational quadratic form.\\
If $F_{0}=N^{\circ}\ltimes\{0\}$, then since $F_{0}x$ is closed, $F_{0}\cap (g,v)^{-1}\Gamma (g,v)$ is a lattice in $F_{0}$. Since $W\ltimes\{0\}$ is the unipotent radical of $F_{0}$ and $F_{0}$ is solvable, by (Corollary 8.25, \cite{Rag}) we get that $W\ltimes\{0\}\cap (g,v)^{-1}\Gamma (g,v)$ is a lattice in $W\ltimes \{0\}$. This implies that $(W\ltimes\{0\}) x$ is closed. Similarly if $F_{0}=N_{1}^{\circ}\ltimes \RR$ we get that $(W\ltimes \RR) x$ is closed and if $F_{0}=N_{2}^{\circ}\ltimes \RR^{2}$ then $(W\ltimes \RR^{2})x$ is closed. In each of these cases using the same argument as when $P(F_{0})=W$, one can show that $L^{2}$ is not an irrational quadratic form.\\

Case 2: Suppose $L(\xi)\neq 0$. Then $\alpha \neq 0$ and by Lemma $\ref{l8}$, $F_{\alpha}$ is one of the subgroups  $V_{1}\ltimes \RR^{2}, V_{1}\ltimes \RR^{3},\; V\ltimes \RR^{2}, V\ltimes \RR^{3},\;W\ltimes \RR^{2}, W\ltimes \RR^{3},\; v(t)\SO(Q_{0})^{\circ}v(t)^{-1}\ltimes \RR^{3},\; Q_{1}\ltimes\RR^{3},\; Q_{2}\ltimes \RR^{2},\; Q_{2}\ltimes\RR^{3},\;N^{\circ}\ltimes \RR^{3},\; N_{2}^{\circ}\ltimes \RR^{2},\; \SL(3,\RR)\ltimes\RR^{3},\;A_{\alpha},\;B_{\alpha},\; P_{\beta}$ for $\beta \in \RR\setminus\{0\}$.\\
If $F_{\alpha}$ is one of the subgroups $V_{1}\ltimes \RR^{3},\; V\ltimes \RR^{3},\; W\ltimes \RR^{3},\; v(t)SO(Q_{0})^{\circ}v(t)^{-1}\ltimes \RR^{3},\; Q_{1}\ltimes\RR^{3},\;Q_{2}\ltimes\RR^{3},\;N^{\circ}\ltimes \RR^{3},\; \SL(3,\RR)\ltimes \RR^{3}, P_{\beta} $ for $\beta \in \RR\setminus\{0\}$
then $\overline{(g,v)F_{\alpha}(g,v)^{-1}\ZZ^{3}}=\RR^{3}$. Hence $\overline{f(\ZZ^{3})}\supseteq f(\RR^{3})$ which implies $\overline{f(\ZZ^{3})}=\RR^{2},$ since by Lemma \ref{l5}, $\overline{f(\RR^{3})}=\RR^{2}$ .\\
 If $F_{\alpha}$ is such that $P(F_{\alpha})=V_{1},V,W$ or $Q_{2}$, then by the same argument as in case 1, we get that $L^{2}$ is not an irrational quadratic form.\\
 If $F_{\alpha}=N_{2}^{\circ}\ltimes \RR^{2}$, then again by the same argument as in case 1, we get that $L^{2}$ is not an irrational quadratic form.

		\end{proof}

\end{document}